\newtheorem{theorem}{Theorem}[section]
\newtheorem{proposition}[theorem]{Proposition}
\newtheorem{lemma}[theorem]{Lemma}
\newtheorem{corollary}[theorem]{Corollary}
\newtheorem{definition}[theorem]{Definition}
\newtheorem{example}[theorem]{Example}
\newtheorem{question}[theorem]{Question}
\theoremstyle{remark}
\def\a{\alpha}
\def\e{\epsilon}
\def\k{\kappa}
\def\cf{\mathcal{F}}
\def\F{\mathbb{F}}
\def\E{\mathbb{E}}
\def\P{\mathbb{P}}
\def\N{\mathbb {N}}
\DeclareMathOperator{\rk}{rk}
\title{Ranges of polynomials control degree ranks of Green and Tao over finite prime fields}
\author{Thomas Karam\footnote{Mathematical Institute, University of Oxford. Email: \texttt{thomas.karam@maths.ox.ac.uk}.}}
\begin{document}


\maketitle

\begin{abstract}

Let $p$ be a prime, let $1 \le t < d < p$ be integers, and let $S$ be a non-empty subset of $\F_p$. We establish that if a polynomial $P:\F_p^n \to \F_p$ with degree $d$ is such that the image $P(S^n)$ does not contain the full image $A(\F_p)$ of any non-constant polynomial $A: \F_p \to \F_p$ with degree at most $t$, then $P$ coincides on $S^n$ with a polynomial that in particular has bounded degree-$\lfloor d/(t+1) \rfloor$-rank in the sense of Green and Tao. Similarly, we prove that if the assumption holds even for $t=d$, then $P$ coincides on $S^n$ with a polynomial determined by a bounded number of coordinates.

\end{abstract}

\tableofcontents

\section{Introduction and main results}

Throughout this paper, the letter $n$ will always denote a positive integer, and all our statements will be uniform in $n$.

A landmark result of Green and Tao proved in 2007 \cite{Green and Tao} states that over a finite prime field $\F_p$ for some prime $p$, a multivariate polynomial with degree $1 \le d < p$ that is not approximately equidistributed can be expressed as a function of a bounded number of polynomials each with degree at most $d-1$. More formally, we have the following statement.

\begin{theorem}[\cite{Green and Tao}, Theorem 1.7] \label{formulation of Green and Tao}

Let $p$ be a prime, and let $1 \le d < p$ be an integer. Then there exists a function $K_{p,d}: (0, 1 \rbrack \rightarrow \N$ such that for every $\e>0$, if $P: \F_p^n \rightarrow \F_p$ is a polynomial with degree $d$ satisfying $|\E_{x \in \F_p^n} \omega_p^{s P(x)}| \ge \epsilon$ for some $s \in \F_p^*$, then there exist $k \le  K_{p,d}(\epsilon)$, polynomials $P_1, \dots, P_k: \F_p^n \to \F_p$ with degree at most $d-1$ and a function $F:\F_p^k \to \F_p$ satisfying \[P = F(P_1, \dots, P_k).\]

\end{theorem}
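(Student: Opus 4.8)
The plan is to reconstruct the argument of Green and Tao. Replacing $P$ by $sP$, which still has degree $d$ since $s\in\F_p^*$, I may assume $s=1$. The case $d=1$ is immediate (a non-constant affine polynomial is equidistributed), so assume $d\ge2$. Write $P=P_d+Q$, where $P_d$ is the top homogeneous part, which I may assume is non-zero, and $\deg Q\le d-1$. If $P_d=F(Q_1,\dots,Q_k)$ with all $\deg Q_i\le d-1$, then $P=F(Q_1,\dots,Q_k)+Q$ is a function of $Q_1,\dots,Q_k,Q$, all of degree at most $d-1$; so it suffices to bound, in terms of $\e,p,d$, the number of polynomials of degree $\le d-1$ needed to express $P_d$.

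I would then pass to the symmetric $d$-linear form $M\colon(\F_p^n)^d\to\F_p$ obtained by fully polarizing $P_d$, namely $M(h_1,\dots,h_d)=\Delta_{h_1}\cdots\Delta_{h_d}P$, where $\Delta_hR(y):=R(y+h)-R(y)$. Since $\deg P=d$, this $d$-fold difference annihilates all lower-degree terms, so $M$ is well defined, independent of the base point, and symmetric, and it satisfies the identity $M(x,x,\dots,x)=d!\,P_d(x)$. Because $d<p$, the scalar $d!$ is invertible in $\F_p$, whence $P_d(x)=(d!)^{-1}M(x,\dots,x)$; this is the one place the hypothesis $d<p$ enters. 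To see that $|\E_x\omega_p^{P(x)}|\ge\e$ forces $M$ to be far from equidistributed, I would use monotonicity of the Gowers norms, $\e\le|\E_x\omega_p^{P(x)}|=\|\omega_p^{P}\|_{U^1}\le\|\omega_p^{P}\|_{U^d}$, together with the fact that, again because $\deg P=d$, the $2^d$-fold signed product of $\omega_p^{P}$ over a combinatorial cube equals $\omega_p^{M(h_1,\dots,h_d)}$ or its conjugate, with no dependence on the base point; hence $\|\omega_p^{P}\|_{U^d}^{2^d}=\E_{h_1,\dots,h_d}\omega_p^{M(h_1,\dots,h_d)}$, which is therefore a non-negative real and is at least $\e^{2^d}$.

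The heart of the proof is an inverse theorem for multilinear forms: if a $d$-linear form $M$ satisfies $\E_{h_1,\dots,h_d}\omega_p^{M(h_1,\dots,h_d)}\ge\eta$, then $M$ has bounded partition rank, i.e. $M=\sum_{i=1}^{r}M_i'M_i''$ for some $r\le R_{p,d}(\eta)$, where each summand splits as a product of two multilinear forms in complementary non-empty subsets of the $d$ arguments. I would prove this by induction on $d$. For $d=2$ the Gauss-sum identity $\E_{x,y}\omega_p^{M(x,y)}=p^{-\rk(M)}$ gives $\rk(M)\le\log_p(1/\eta)$, and a rank factorisation of the matrix of $M$ is the required decomposition. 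For $d\ge3$, write the average as $\E_{h_1}\bigl(\E_{h_2,\dots,h_d}\omega_p^{M(h_1,h_2,\dots,h_d)}\bigr)$, note that the inner quantity is the (non-negative) bias of the $(d-1)$-linear form $M(h_1,\cdot,\dots,\cdot)$, so that a positive proportion of the $h_1$ give a $(d-1)$-linear form of bias bounded below, hence of bounded partition rank by induction, and then upgrade ``bounded partition rank on a positive-density fibre'' to ``bounded partition rank of $M$'' by a robust linear-algebra argument — the prototype being that a linear space of $n\times n$ matrices, a positive proportion of whose members have rank at most $r$, lies in the sum of a bounded-dimensional subspace and the set of matrices of rank $\le r$.

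To conclude, I would descend: substituting $x$ into a partition-rank-$r$ decomposition of $M$ gives $P_d(x)=(d!)^{-1}\sum_{i=1}^{r}M_i'(x,\dots,x)M_i''(x,\dots,x)$, a function of the at most $2r$ polynomials $M_i'(x,\dots,x),M_i''(x,\dots,x)$, each of degree at most $d-1$ since each involves strictly fewer than $d$ repeated arguments; combined with the reduction of the first paragraph, $P$ becomes a function of at most $2r+1$ polynomials of degree $\le d-1$, with $r\le R_{p,d}(\e^{2^d})$, which yields the function $K_{p,d}$. The main obstacle is the inductive step of the multilinear inverse theorem — the robust passage from fibrewise to global bounded partition rank — which is where essentially all of the work, and all of the quantitative losses, lie; note also that since the statement fails for $d\ge p$, some genuine use of $d<p$ (here the invertibility of $d!$) is unavoidable.
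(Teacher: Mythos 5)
This theorem is not proved in the paper: it is quoted verbatim from \cite{Green and Tao} (Theorem 1.7 there) and used as a black box, so there is no ``paper's own proof'' to compare against. I will therefore review your reconstruction on its own terms.

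Your preliminary reductions are all correct and standard: replacing $P$ by $sP$, isolating the top homogeneous part $P_d$ and observing that it suffices to decompose $P_d$, passing to the fully polarised $d$-linear form $M=\Delta_{h_1}\cdots\Delta_{h_d}P$ (this is where $d<p$ is genuinely used, via invertibility of $d!$), and the Gowers-norm monotonicity chain $\e\le\|\omega_p^P\|_{U^1}\le\|\omega_p^P\|_{U^d}$ together with the identity $\|\omega_p^P\|_{U^d}^{2^d}=\E_{h}\omega_p^{\pm M(h)}$. All of that is sound. Note, however, that you are aiming at the \emph{stronger} conclusion of bounded partition rank (i.e.\ the refinement due to Janzer \cite{Janzer} and Mili\'cevi\'c \cite{Milicevic} quoted just after the theorem in the paper), which implies the Green--Tao statement that $P$ is a function of boundedly many lower-degree polynomials but goes beyond what Green and Tao actually prove by their method.

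The genuine gap is exactly where you flag it, and it is larger than the flag suggests: the passage from ``a positive-density set of slices $M(h_1,\cdot,\ldots,\cdot)$ has bounded partition rank'' to ``$M$ itself has bounded partition rank'' is not a known piece of ``robust linear algebra,'' and the matrix prototype you cite is not a theorem one can simply invoke and tensorise. This fibrewise-to-global upgrade is essentially equivalent in difficulty to the bias-implies-bounded-partition-rank statement it is meant to prove, and none of the existing proofs take the route you sketch. Green and Tao's original argument for Theorem~1.7 is a polynomial-regularity argument (iterated refinement of a factor of lower-degree polynomials, with an equidistribution/counting contradiction) rather than a direct induction on the arity of a multilinear form, and the later direct partition-rank proofs of Janzer and Mili\'cevi\'c use amplification and a much more delicate induction than ``restrict to good fibres and lift.'' As written, the inductive step from $d-1$ to $d$ is asserted rather than proved, and it is precisely the content of the theorem; without it the argument does not close.
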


It has been known since at least the works of Janzer \cite{Janzer} and Mili\'cevi\'c \cite{Milicevic} that the conclusion can be made qualitatively more precise. Before stating this strengthening, let us define a notion of degree-$d$ rank for polynomials.

\begin{definition} \label{rank of a polynomial} Let $\F$ be a field, and let $P: \F^n \rightarrow \F$ be a polynomial. Let $d \ge 1$ be an integer.

We say that a polynomial $P$ has \emph{degree-$d$ rank at most $1$} if we can write $P$ as a product of polynomials each with degree at most $d$.

The \emph{degree-$d$ rank} of $P$ is defined to be the smallest nonnegative integer $k$ such that there exist polynomials $P_1, \dots, P_k$ each with degree-$d$ rank at most $1$, with degree at most the degree of $P$, and satisfying \[P = P_1 + \dots + P_k.\] We denote this quantity by $\rk_d P$.

\end{definition}

The zero polynomial in particular has degree-$d$ rank equal to $0$ for all $d$, and constant polynomials have degree-$d$ rank at most $1$ for all $d$.

We define this notion of degree-$d$ rank in this way as doing so will be convenient for us, but it is worth pointing out that in the original paper \cite{Green and Tao} of Green and Tao, the notion referred to as the degree-$d$ rank was slightly different: for instance the degree-$(d-1)$ rank was the largest possible $k$ in Theorem \ref{formulation of Green and Tao}. Nonetheless, it follows immediately from the definitions that for every polynomial $P:\F_p^n \to \F_p$ and every positive integer $d$, the degree-$d$ rank of $P$ in the sense of Green and Tao is at most $d$ times the degree-$d$ rank of $P$ in our sense.  Therefore, proving that a polynomial has bounded degree-$d$ rank in our sense implies showing that it has bounded degree-$d$ rank in the sense of Green and Tao.

The main qualitative refinement shown in the papers of Janzer \cite{Janzer} and Mili\'cevi\'c \cite{Milicevic} is that there exists some function $H_{p,d}: (0, 1] \to \N$ such that under the assumptions of Theorem \ref{formulation of Green and Tao}, we can find $k \le H_{p,d}(\epsilon)$ and polynomials $Q_1, R_1, \dots, Q_k, R_k$ satisfying \[\deg Q_i, \deg R_i \le d-1 \text{ and } \deg Q_i + \deg R_i \le d\] for each $i \in [k]$ and such that \[P = Q_1 R_1 + \dots + Q_k R_k.\] In other words, it was shown that \[\rk_{d-1} P \le H_{p,d}(\epsilon).\] This is a bound on the degree-$(d-1)$ rank of $P$, and the numerous developments which arose out of Theorem \ref{formulation of Green and Tao} have to our knowledge entirely or almost entirely focused on the degree-$(d-1)$ rank of $P$: some extended the range of validity of the results (Kaufman and Lovett \cite{Kaufman and Lovett}, Bhowmick and Lovett \cite{Bhowmick and Lovett}), and others improved the quantitative bounds on the degree-$(d-1)$ rank, through the closely related question of comparing the partition rank to the analytic rank of tensors (Janzer \cite{Janzer}, Mili\'cevi\'c \cite{Milicevic}, Adiprasito, Kazhdan and Ziegler \cite{Adiprasito Kazhdan Ziegler}, Moshkovitz and Cohen \cite{Moshkovitz and Cohen 1}, \cite{Moshkovitz and Cohen 2}, Moshkovitz and Zhu \cite{Moshkovitz and Zhu}).

For the purposes of studying approximate equidistribution of polynomials this is unsurprising, since the notion of degree-$(d-1)$ rank is indeed by far the most relevant: for instance a random polynomial of the type \[x_1 Q(x_2, \dots, x_n)\] with $\deg Q = d-1$ has high degree-$(d-2)$ rank but is nonetheless not approximately equidistributed, since the probability that it takes the value $0$ is approximately $2/p-1/p^2 > 1/p$.

Rather than focus on the fact that for a degree-$d$ polynomial, lack of equidistribution implies bounded degree-$(d-1)$ rank, we may ask for analogues of this statement involving much stronger properties in the assumption and in the conclusion. Correspondingly, the main motivations of this paper are twofold. In one direction, we ask what can be deduced about polynomials for which we know much more than lack of equidistribution. What can we say if we know that a polynomial does not take every value of $\F_p$, or has a smaller range still, in a sense to be made precise ? In the other direction, we can ask, for a fixed integer $1 \le e \le d-1$, whether there are any properties of the distribution of the values of a polynomial which would guarantee that its degree-$e$ rank is bounded above. We will contribute to both directions simultaneously, by showing that if a polynomial $P$ does not have full range, then it must have bounded degree-$e$ rank, for some integer $e$ that is determined by the degree of $P$ and by the smallest degree of a non-constant \emph{one-variable} polynomial that has a range contained in the range of $P$.

\begin{theorem} \label{Main result on ranges with full alphabet} Let $p$ be a prime, and let $1 \le t \le d < p$ be integers. There exists a positive integer $\gamma(p,d,t)$ such that the following holds. Let $P: \mathbb{F}_p^n \to \mathbb{F}_p$ be a polynomial with degree at most $d$. Assume that the image $P(\F_p^n)$ does not contain the image of $\mathbb{F}_p$ by any non-constant polynomial $\mathbb{F}_p \to \mathbb{F}_p$ with degree at most $t$.

\begin{enumerate}

\item If $t \le d-1$, then $P$ has degree-$\lfloor d/(t+1) \rfloor$-rank at most $\gamma(p,d,t)$.

\item If $t = d$ then $P$ is a constant polynomial.

\end{enumerate}

\end{theorem}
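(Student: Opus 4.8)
The plan is to argue by contraposition, reducing both parts to a statement about the range of $P$: I will produce a constant $\gamma(p,d,t)$ such that if $P$ has degree at most $d$ and, writing $e=\lfloor d/(t+1)\rfloor$, one has $\rk_e P>\gamma(p,d,t)$ (in case $t\le d-1$, where $e\ge 1$), or $P$ is non-constant (in case $t=d$), then $P(\F_p^n)$ contains the full image $A(\F_p)$ of some non-constant polynomial $A:\F_p\to\F_p$ with $\deg A\le t$; each conclusion contradicts the hypothesis, giving (1) and (2). Part (2) needs no rank theory: if $P$ is non-constant, let $\delta=\deg P$, so $1\le\delta\le d=t$, and let $P_\delta\ne 0$ be its homogeneous degree-$\delta$ part; since $\delta<p$, a nonzero polynomial of total degree $\delta$ does not vanish identically on $\F_p^n$, so fix $h$ with $P_\delta(h)\ne 0$, and then for any $a$ the one-variable polynomial $A(y):=P(a+yh)$ has degree exactly $\delta\in[1,t]$, is non-constant, and satisfies $A(\F_p)\subseteq P(\F_p^n)$.

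For part (1) I would invoke the polynomial regularity lemma (which comes out of the Green--Tao circle of ideas, e.g.\ by iterating Theorem~\ref{formulation of Green and Tao}). Since $\rk_e P$ is bounded by the number $\binom{n+d}{d}$ of monomials of degree at most $d$, the hypothesis forces $n$ large; regularizing $P$ and passing to a minimal factor gives $P=F(Q_1,\dots,Q_m)$ with $(Q_1,\dots,Q_m)$ a sufficiently regular family of polynomials of degrees $\delta_1,\dots,\delta_m\in[1,d]$ and $m$ bounded in terms of $p,d$, enjoying the standard consequences of high regularity: (a) the map $\Psi=(Q_1,\dots,Q_m):\F_p^n\to\F_p^m$ is surjective (every atom is non-empty, since every nontrivial $\F_p$-combination of the $Q_i$ is unbiased); (b) by minimality, $F$ depends on each of its variables; and (c) writing $F=\sum_\alpha c_\alpha w^\alpha$ as a polynomial over $\F_p$, every monomial satisfies $\sum_i\alpha_i\delta_i\le d$, because the leading forms of a regular family are algebraically independent up to bounded degree, so no cancellation of top-degree terms can occur. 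Now I dichotomize. If $\delta_i\le e$ for all $i$, then by (c) each product $\prod_i Q_i^{\alpha_i}$ appearing in $P=\sum_\alpha c_\alpha\prod_i Q_i^{\alpha_i}$ is a product of at most $d$ polynomials of degree at most $e$ and of total degree at most $d$, hence has degree-$e$-rank at most $1$, so $\rk_e P\le p^m$; taking $\gamma(p,d,t)\ge p^m$ excludes this case. Otherwise some $\delta_1>e$, hence $\delta_1\ge e+1>d/(t+1)$, so $\lfloor d/\delta_1\rfloor\le t$ and, by (c), $\deg_{w_1}F\le\lfloor d/\delta_1\rfloor\le t$; by (b) there are $c_2,\dots,c_m\in\F_p$ with $G(w):=F(w,c_2,\dots,c_m)$ non-constant, and $\deg G\le\deg_{w_1}F\le t$; finally by (a), $P(\F_p^n)=F(\F_p^m)\supseteq F\big(\F_p\times\{(c_2,\dots,c_m)\}\big)=G(\F_p)$, so $A:=G$ works.

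I expect the main obstacle to be assembling the regularity inputs (a)--(c) in a usable quantitative form. Of these, (c) --- the assertion that a polynomial of degree at most $d$ measurable with respect to a sufficiently regular degree-$\le d$ factor can be represented with all partial degrees $\deg_{w_i}F\le\lfloor d/\delta_i\rfloor$ --- is the step that leans hardest on regularity, through algebraic independence of the leading forms up to bounded degree; and one must track how strong a regularity (rank) parameter is needed, for (a) the bias of nontrivial combinations must be pushed below $p^{-m}$, and for (c) the rank must beat the bounded degrees of the potential relations. Once these are in hand, the two branches of the dichotomy and the final chain $P(\F_p^n)=F(\F_p^m)\supseteq G(\F_p)$ are short, and, since $m$ and $\gamma$ depend only on $p,d,t$, everything is uniform in $n$.
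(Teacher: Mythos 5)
Your argument is correct in outline but takes a genuinely different route from the paper on both parts, so let me compare.

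For part (2), your line-restriction argument (pick $h$ with $P_\delta(h)\neq 0$, set $A(y)=P(a+yh)$, invoke that a nonzero form of degree $\delta<p$ does not vanish on $\F_p^n$) is clean and more direct than the paper's. The paper instead proves the more general Proposition~\ref{related statement}, which allows a general alphabet $S\subsetneq\F_p$; there a line restriction is unavailable because the line $\{a+yh:y\in\F_p\}$ need not stay inside $S^n$, so the paper induces on $n$ by peeling off the coefficient polynomials of the powers of $x_n$. For $S=\F_p$ your shortcut works and is simpler.

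For part (1), both proofs share the key conceptual move: express $P$ as a function of a bounded family $(Q_1,\dots,Q_m)$, isolate the entry of highest degree, note that only powers up to $t$ of it can appear (because of the degree budget $d$), and then deduce from equidistribution that $P(\F_p^n)$ contains the full image of a one-variable polynomial $A$ of degree between $1$ and $t$, contradicting the hypothesis. Where you part ways is in how the decomposition $P=F(Q_1,\dots,Q_m)$ is obtained and controlled. You invoke a one-shot polynomial regularity lemma with three consequences: (a) surjectivity of the factor map, (b) nondegeneracy of $F$, and (c) the degree-preservation (``weighted-degree'') statement $\sum_i\alpha_i\delta_i\le d$ for every monomial of $F$. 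Of these, (c) is the heavy input --- it requires a strong form of regularity guaranteeing that the leading forms $Q_1^{\mathrm{top}},\dots,Q_m^{\mathrm{top}}$ admit no low-degree algebraic relation, which is strictly more than the bias/equidistribution statement you cite in (a); you flag this yourself, and it is where the real work would be if you wrote this out. The paper avoids needing (c) as an external black box: it uses only the equidistribution theorem (Theorem~\ref{Equidistribution on restricted alphabets}) to get an initial decomposition $P=\sum Q_iR_i$ with explicit degree bounds, and then builds the degree control by hand via ``acceptable decompositions'' together with an induction on the degree description in colexicographic order, using Proposition~\ref{Conditional full range on restricted alphabets} at each step to either delete the top polynomial, derive the contradiction, or replace it by lower-degree ones. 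That iteration is longer but self-contained relative to the equidistribution black box, and --- importantly --- it also works when $\F_p^n$ is replaced by $S^n$, whereas your use of surjectivity in (a) does not (the factor map is not onto $S^n$ in general, which is precisely why the paper needs the conditional-range dichotomy of Lemma~\ref{Lower bound lemma for polynomials}). So your argument is a valid and somewhat shorter proof of the full-alphabet case, at the cost of leaning on a stronger regularity package and of not extending to Theorem~\ref{Main result on ranges}.

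One small correction: for the ``all $\delta_i\le e$'' branch you want to justify the bound on the number of monomials by the fact that $\alpha_i<p$ for each $i$, which gives at most $p^m$ monomials (the constraint $\sum\alpha_i\delta_i\le d$ alone would give a different count); you do get the right bound $p^m$, just make the reason explicit.
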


The value $\lfloor d/(t+1) \rfloor$ in the degree of the rank in Theorem \ref{Main result on ranges} is optimal in general, as the following example shows.

\begin{example}

Let $p$ be a prime, let $1 \le d < p$ be an integer, let $t,u \ge 1$ be integers such that $tu \le d$. If $Q$ is a random polynomial $Q: \F_p^n \to \F_p$ with degree $u$, then $Q^t$ has degree at most $d$, the image $Q(\F_p^n)$ is contained in the set $\{y^t: y \in \F_p\}$ of $t$-th power residues mod-$p$, but the degree-$(u-1)$-rank of $P$ is usually arbitrarily large as $n$ tends to infinity, even if it is taken in the sense of Green and Tao.

\end{example}

The last part follows from a counting argument: as n tends to infinity there are $p^{O(n^{u-1})}$ polynomials $\F_p^n \to \F_p$ with degree at most $u-1$, so for every $k \ge 1$, the number of polynomials of the type $F(P_1,\dots,P_k)$ with $P_1, \dots, P_k$ with degree at most $u-1$ and $F: \F_p^k \to \F_p$ a function is at most $p^{p^k} p^{O(kn^{u-1})} = p^{O(kn^{u-1})}$, whereas there are $p^{\Omega(kn^u)}$ polynomials $\F_p^n \to \F_p$ with degree $u$ and hence at least $1/t$ times as many polynomials of the type $Q^t$ above.

Powers of polynomials are not the only simple examples of polynomials that do not have full range. They can instead be viewed as a special case of a broader class of examples that arises from composition with a one-variable polynomial.

\begin{example}

Let $p$ be a prime, let $1 \le d < p$ be an integer, let $t,u \ge 1$ be integers such that $tu \le d$. If $Q: \F_p^n \to \F_p$ is a polynomial with degree $u$, and $A: \F_p \to \F_p$ is a polynomial with degree $t$, then the polynomial $A \circ Q$ has degree at most $d$, and the image $A \circ Q(S^n)$ is contained in the image $A(\F_p)$.

\end{example}

We stress that the main result from the approximate equidistribution regime will itself be an important black box that we will use in our proof of Theorem \ref{Main result on ranges with full alphabet}. We shall in fact prove results in a more general setting than that of Theorem \ref{Main result on ranges with full alphabet}, where we allow the assumption to be on the image $P(S^n)$ for some non-empty subset $S$ of $\F_p$ rather than on the whole image $P(\F_p^n)$. On a first reading the set $S$ may be taken to be $\{0,1\}$. In the setting of restrictions to $S^n$, the approximate equidistribution statement was proved by Gowers and the author in \cite{Gowers and K equidistribution}. Before stating it, let us recall from that paper two points to be aware of regarding restrictions of polynomials to $S^n$.

The first is that whereas an affine polynomial is either constant or perfectly equidistributed on $\F_p^n$, there is already something to say about the distribution of an affine polynomial $P$ on $S^n$ for general non-empty $S$: if $S \neq \F_p$ and $P$ depends only on one coordinate, then $P(S^n)$ is not even the whole of $\F_p$. As a simple Fourier argument however shows (\cite{Gowers and K equidistribution}, Proposition 2.2), an affine polynomial depending on many coordinates is approximately equidistributed on $S^n$, provided that $S$ contains at least two elements. The second is that we may no longer hope to conclude in general that a polynomial with degree $d$ which is not approximately equidistributed on $S^n$ must itself have bounded degree-$(d-1)$ rank: for instance, the polynomial \[\sum_{i=1}^n x_i^2 - x_i\] has degree-$1$ rank equal to $n$, but only takes the value $0$ on $\{0,1\}^n$ and is in particular not approximately equidistributed on $\{0,1\}^n$. Nonetheless, the zero polynomial, with which this polynomial coincides on $\{0,1\}^n$, itself has degree-$1$ rank equal to $0$.

These two remarks motivate an extension of Definition \ref{rank of a polynomial}.

\begin{definition}\label{further definitions of rank of a polynomial}

Let $\F$ be a field, and let $P: \F^n \rightarrow \F$ be a polynomial.

The \emph{degree-$0$ rank} of $P$ is defined to be the smallest nonnegative integer $k$ such that we can write $P$ as a linear combination of at most $k$ monomials. We denote this quantity by $\rk_{0} P$.

If $d$ is a nonnegative integer and $S$ is a non-empty subset of $\F$ then we define the \emph{degree-$d$ rank of $P$ with respect to $S$} as the smallest value of $\rk_d (P-P_0)$, where the minimum is taken over all polynomials $P_0$ with degree at most the degree of $P$ and satisfying $P_0(S^n) = \{0\}$. We denote this quantity by $\rk_{d,S} P$.

\end{definition}

We now recall a slight weakening of the main result of \cite{Gowers and K equidistribution}, Theorem 1.4 from that paper. (Although the full statement of that theorem is slightly more precise, the formulation below is slightly simpler to use and suffices for the purposes of the present paper.)

\begin{theorem} \label{Equidistribution on restricted alphabets} Let $p$ be a prime, let $1 \le d < p$ be an integer, and let $S$ be a non-empty subset of $\F_p$. There exists a function $H_{p,d,S}: (0, 1 \rbrack \rightarrow \N$ such that for every $\e>0$, if $P: \F_p^n \rightarrow \F_p$ is a polynomial with degree $d$ satisfying $|\E_{x \in S^n} \omega_p^{s P(x)}| \ge \epsilon$ for some $s \in \F_p^*$, then $ \rk_{d-1,S} P \le H_{p,d,S}(\epsilon)$. \end{theorem}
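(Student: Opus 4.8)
The plan is to pass to the top-degree part of $P$ and bound the partition rank of its polarization tensor. First I would replace $P$ by $sP$ so as to assume $s=1$, then replace $P$ by its reduction $\bar P$ modulo the ideal generated by the polynomials $V_S(x_i):=\prod_{a\in S}(x_i-a)$, $i\in[n]$. This changes neither the restriction to $S^n$ nor the property of having degree at most $d$; it makes every variable appear in $\bar P$ with individual degree $<|S|$; and since $P-\bar P$ has degree at most $d$ and vanishes on $S^n$, we have $\rk_{d-1,S}P\le\rk_{d-1}\bar P$, so it suffices to bound the latter. If $\deg\bar P\le d-1$ then $\rk_{d-1}\bar P\le 1$ and we are done, so assume $\deg\bar P=d$ and write $\bar P=\bar P_{=d}+\bar P_{<d}$ with $\bar P_{=d}$ homogeneous of degree $d$. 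Since $\rk_{d-1}\bar P\le\rk_{d-1}\bar P_{=d}+1$, and since $d<p$ allows us to polarize, it is enough to bound the partition rank of the symmetric $d$-linear form $D:=\partial_{h_1}\cdots\partial_{h_d}\bar P_{=d}$, which does not depend on $x$ and satisfies $D(x,\dots,x)=d!\,\bar P_{=d}(x)$. (The case $|S|=1$ is trivial, with $\rk_{d-1,S}P\le 1$ always, so assume $|S|\ge 2$.)

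Next I would run the standard Gowers-norm argument, but with the uniform measure on the product set $S^n$ in place of the Haar measure on $\F_p^n$. For $f:\F_p^n\to\C$ let $\|f\|_{\mathcal{U}^k(S^n)}$ be the $2^k$-th root of the average of $\prod_{\eta\in\{0,1\}^k}\mathcal{C}^{|\eta|}f(x+\eta\cdot h)$ over all $(x,h_1,\dots,h_k)$ for which the $2^k$ points $x+\eta\cdot h$ all lie in $S^n$. Because $S^n$ is a product set the relevant cube measure is an $n$-fold product of a fixed measure on $\F_p^{k}$, and the usual Cauchy--Schwarz monotonicity $\|f\|_{\mathcal{U}^1(S^n)}\le\cdots\le\|f\|_{\mathcal{U}^d(S^n)}$ carries over; moreover $\|f\|_{\mathcal{U}^1(S^n)}=|\E_{x\in S^n}f(x)|$. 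Applying this to $f=\omega_p^{\bar P}$ yields $\|\omega_p^{\bar P}\|_{\mathcal{U}^d(S^n)}\ge\epsilon$. Expanding the definition and using $\deg\bar P=d<p$, so that $\sum_{\eta\in\{0,1\}^d}(-1)^{|\eta|}\bar P(x+\eta\cdot h)=\pm D(h_1,\dots,h_d)$ identically in $x$ and $h$, we arrive at $|\E_{(h_1,\dots,h_d)\sim\nu}\,\omega_p^{D(h_1,\dots,h_d)}|\ge\epsilon^{2^d}$, where $\nu=\nu_0^{\otimes n}$ and $\nu_0$ is an explicit probability measure on $\F_p^{d}$ supported on the direction-tuples of $d$-dimensional combinatorial cubes contained in $S$; in particular every coordinate of every $h_j$ lies in $S-S$, and the marginal of $\nu_0$ in each slot is supported on all of $S-S$.

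The final step is a restricted-alphabet analogue of the analytic-rank/partition-rank correspondence for multilinear forms: the symmetric form $D$, which additionally inherits from the reducedness of $\bar P$ the property that no coordinate index is repeated $|S|$ or more times among its nonzero entries, and which has bias at least $\epsilon^{2^d}$ against the product measure $\nu$, must have partition rank at most some $r=r(p,d,S,\epsilon)$. I would prove this by reduction to the known $\F_p^{dn}$-version (due to Janzer and Mili\'cevi\'c and their successors): since the slotwise marginals of $\nu_0$ generate $\F_p$ as a group, one can, one slot at a time, trade the restricted marginal for the uniform distribution on $\F_p^n$ at the cost of a bounded power of the bias and of replacing $D$ by a derived form of no larger partition rank, via Cauchy--Schwarz and dilation/convolution. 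Granting $\operatorname{pr}(D)\le r$, the identity $D(x,\dots,x)=d!\,\bar P_{=d}(x)$ exhibits $\bar P_{=d}$ as a sum of at most $r$ products of two polynomials whose degrees sum to $d$, hence $\rk_{d-1}\bar P_{=d}\le r$, and therefore $\rk_{d-1,S}P\le\rk_{d-1}\bar P\le r+1=:H_{p,d,S}(\epsilon)$.

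The main obstacle is precisely this last step. The difficulty is genuine rather than cosmetic: $\sum_{i=1}^n(x_i^2-x_i)$ is identically $0$ on $\{0,1\}^n$, so it has bias $1$ there, yet the polarization tensor $\sum_i 2h_ih_i'$ of its top-degree part has full partition rank, and it even has bias $1$ against $\nu$ (its restriction to combinatorial-cube directions is $0$). What rescues the argument is that this example reduces modulo $(x_i^2-x_i)_i$ to the zero polynomial, so the partition-rank statement is only needed, and only true, for forms $D$ coming from \emph{reduced} polynomials and only against the \emph{restricted} measure $\nu$; controlling how $\nu$ and this support restriction interact with a partition-rank decomposition — equivalently, making the slot-by-slot dilation argument quantitatively sound despite the coupling between slots in $\nu_0$ — is where the work lies. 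A secondary, more routine point is the careful set-up of the $1_{S^n}$-weighted Gowers-norm calculus on the non-group set $S^n$, in particular the monotonicity inequality; this presents no conceptual difficulty once one uses that $S^n$ is a product set, but it must be done.
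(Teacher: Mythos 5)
The paper does not actually prove this theorem: it is imported as a black box (a slight weakening of Theorem 1.4 of \cite{Gowers and K equidistribution}), so the relevant comparison is with the proof in that reference, whose overall architecture — reduce modulo the ideal generated by the $\prod_{w\in S}(x_i-w)$, run a $U^d$-type Cauchy--Schwarz argument over the cube measure on the product set $S^n$, then invoke a partition-rank versus analytic-rank statement — you have correctly identified. However, your argument contains a step that fails, and it occurs earlier than the obstacle you flagged. The assertion ``$\rk_{d-1,S}P\le\rk_{d-1}\bar P$, so it suffices to bound the latter'' commits you to proving something false: the reduction $\bar P$ of a biased polynomial need not have bounded degree-$(d-1)$ rank. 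Take $S=\{0,1\}$, $p\ge 3$, $d=2$ and $P=(x_1+\cdots+x_n)^2$. Then $|\E_{x\in S^n}\omega_p^{P(x)}|\to p^{-1/2}$ as $n\to\infty$ (a Gauss sum), and $\rk_{1,S}P\le\rk_{1}P\le 1$, but $\bar P=\sum_i x_i+2\sum_{i<j}x_ix_j$ has quadratic form with matrix $J-I$ of rank at least $n-1$, whence $\rk_1\bar P\ge (n-1)/2$. The same example refutes the key lemma of your third paragraph exactly as stated: $D(h,h')=2\sum_{i\ne j}h_ih'_j$ is the polarization of $\bar P_{=2}$, satisfies your no-repeated-index condition, and has bias at least $\epsilon^{4}$ against the cube measure $\nu$, yet $D=2L\otimes L-2I$ has partition rank at least $n-1$ over $\F_p^{2n}$. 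So reducedness plus the restricted measure do not rescue the partition-rank conclusion, and the slot-by-slot change of measure you propose would be reducing to a false statement.

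The correct conclusion available at that stage is weaker, and extracting it is where the content of \cite{Gowers and K equidistribution} lies: the biased tensor is of bounded rank only \emph{up to a diagonal-like part} supported on tuples with repeated coordinate indices, which is exactly the part that can be reabsorbed into a polynomial vanishing on $S^n$ (in the example, the $-2I$ summand corresponds to $-\sum_i x_i^2$, and $\sum_i(x_i^2-x_i)$ vanishes on $\{0,1\}^n$, recovering $\rk_{1,S}\bar P\le 1$). Making this precise requires the notions of essential partition rank and essential disjoint partition rank (\cite{Gowers and K equidistribution}, Definition 1.9 and Theorem 1.10) together with the high-rank-subtensor theorem of \cite{K}; your diagnosis that the final step is ``where the work lies'' is accurate, but the route you sketch through it does not go, because the intermediate statements it targets are false.
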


We note that if $S$ has size $1$, then Theorem \ref{Equidistribution on restricted alphabets} as well as many of the new results of the present paper hold true for immediate reasons: the set $S^n$ then also has size $1$, so every polynomial coincides on $S^n$ with a constant polynomial, so has degree-$d$ rank at most $1$ for every $d$.

Theorem \ref{Equidistribution on restricted alphabets} as well as the main results in this paper with an assumption on the distribution of values of a polynomial on $S^n$ have a conclusion which holds up to some polynomial that vanishes on $S^n$ (and that has degree at most the degree of the original polynomial). Although we have by now explained why having a conclusion that merely holds up to these polynomials cannot be avoided, we point out that that it is not difficult to describe polynomials which vanish on $S^n$: for instance, if $S = \{0,1\}$, then all powers $x_i, x_i^2, x_i^3, \dots$ are the same, and a polynomial which collapses to $0$ after we identify $x_i^{a_i}$ with $x_i$ inside all monomials whenever $i \in [n]$ and $a_i \ge 1$ necessarily vanishes on $\{0,1\}^n$. Conversely, the combinatorial Nullstellensatz shows that if the polynomial resulting from this identification is not the zero polynomial, then it does not vanish on $\{0,1\}^n$. This characterisation extends, with the exact same proof, to arbitrary non-empty subsets $S$ of $\F_p$: a polynomial vanishes on $S^n$ if and only if substituting $x_i^{a_i}$ by its remainder modulo \[\prod_{w \in S} (x_i-w)\] at every instance of $x_i^{a_i}$, for all $i \in [n]$ and $a_i \ge 1$ leads to the zero polynomial after expanding and simplifying.

When $S$ is not the whole of $\F_p$, one important difference between the sets $\F_p^n$ and $S^n$ is that the former is invariant under linear transformations, whereas the latter is not. We have already discussed one effect on this: the fact that $x_1$ does not take every value of $\F_p$ whereas $x_1 + \dots + x_n$ is approximately equidistributed for $n$ large. The role of coordinates as opposed to general degree-$1$ polynomials will manifest itself further in the proofs and in the main results of this paper. For this purpose let us make one last round of definitions.

\begin{definition}

Let $p$ be a prime, and let $P:\F_p^n \to \F_p$ be a polynomial. 

For each $i \in [n]$, we say that $P$ depends on $x_i$ if the coordinate $x_i$ appears in one of the monomials of $P$. 

We write $I(P)$ for the set of $i \in [n]$ such that $P$ depends on $x_i$. We will refer to this set as the \emph{set of coordinates on which $P$ depends.}

If $P$ is an affine polynomial, that is, a polynomial with degree at most $1$, then we will also write $I(P)$ as $Z(P)$, and refer to it as the \emph{support} of $P$.

For $k$ nonnegative integer, we will say that $P$ is \emph{$k$-determined} if the set $I(P)$ has size at most $k$.

\end{definition}

Using Theorem \ref{Equidistribution on restricted alphabets} as a black box we will prove the following analogue of Theorem \ref{Main result on ranges with full alphabet}, where the assumption on the image is now on $P(S^n)$ rather than on $P(\F_p^n)$. The following theorem is the main result that we shall prove in the present paper.

\begin{theorem} \label{Main result on ranges} Let $p$ be a prime, let $1 \le t \le d < p$ be integers and let $S$ be a non-empty subset of $\F_p$. Then there exists a positive integer $C(p,d,t)$ such that the following holds. Let $P: \mathbb{F}_p^n \to \mathbb{F}_p$ be a polynomial with degree at most $d$. Assume that $P(S^n)$ does not contain the image of $\mathbb{F}_p$ by any non-constant polynomial $\mathbb{F}_p \to \mathbb{F}_p$ with degree at most $t$. \begin{enumerate}

\item If $t \le d-1$, then $P$ coincides on $S^n$ with a polynomial that has degree-$\lfloor d/(t+1) \rfloor$-rank at most $C(p,d,t)$ and has degree at most $d$.

\item If $t = d$ then $P$ coincides on $S^n$ with a linear combination of at most $C(p,d,t)$ monomials with degrees at most $d$.

\end{enumerate} Equivalently, in both cases we have \[ \rk_{\lfloor \frac{d}{t+1} \rfloor, S} P \le C(p,d,t).\]

\end{theorem}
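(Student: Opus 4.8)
The plan is to feed the range hypothesis into the equidistribution black box (Theorem~\ref{Equidistribution on restricted alphabets}), regularise the decomposition this produces, and then use the hypothesis a second time to control the high-degree part of the regular factor; the whole argument is an induction on $d$. For the base case $d=1$ (so $t=1$ and $\lfloor d/(t+1)\rfloor=0$): if $|S|=1$ there is nothing to prove, and if $|S|\ge 2$ then applying the hypothesis with the identity polynomial shows $P(S^n)\ne\F_p$, so by the Fourier estimate of \cite{Gowers and K equidistribution} (an affine polynomial depending on many coordinates is approximately equidistributed, hence surjective, on $S^n$) the polynomial $P$ depends on a bounded number of coordinates and is therefore a linear combination of boundedly many monomials. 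For $d\ge 2$, applying the hypothesis with the identity polynomial (legitimate since $t\ge 1$) shows that $P$ is not surjective on $S^n$, whence a short Fourier computation gives that $P$ is not $\e_0$-equidistributed on $S^n$ for some $\e_0=\e_0(p)>0$; Theorem~\ref{Equidistribution on restricted alphabets} then yields $\rk_{d-1,S}P\le M_0$ for some $M_0=M_0(p,d,S)$, so after subtracting a polynomial that vanishes on $S^n$ we may write $P$ on $S^n$ as a bounded sum of products of polynomials of degree at most $d-1$ whose degrees sum to at most $d$.

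Next I would regularise on top of this. Iterating the same black box to replace low-rank pieces by high-rank ones, I may assume that $P$ coincides on $S^n$ with $F(Q_1,\dots,Q_r)$, where $Q_1,\dots,Q_r$ is a regular family of polynomials of degrees at most $d-1$, jointly equidistributed on $S^n$, $F$ has degree at most $d$ in the $Q_i$, and the multidegree bound $\sum_i a_i\deg Q_i\le d$ holds for every monomial $Q^{\mathbf a}$ occurring in $F$. The one genuinely $S$-specific point here, and the first place care is needed, is that on $S^n$ the degree-$1$ part of a regular family must be carried by actual coordinates rather than by arbitrary linear forms (an arbitrary linear form may fail to be equidistributed on $S^n$), and that, this being taken into account, the image of a regular family on $S^n$ is still essentially all of $\F_p^r$, so that $P(S^n)$ is essentially the full image of $F$.

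The heart of the argument is then a slicing observation. Fix $i_0$ with $\deg Q_{i_0}=f$ and write $P\equiv\sum_{a\ge 0}Q_{i_0}^{a}\,F_a(Q_j:j\ne i_0)$ on $S^n$. Since $Q_{i_0}$ is equidistributed even conditionally on the remaining members of the family, the set $P(S^n)$ contains, for each value $\mathbf w$ attained by $(Q_j)_{j\ne i_0}$, the image of the one-variable polynomial $z\mapsto\sum_a F_a(\mathbf w)z^a$, whose degree is at most $\max\{a:Q_{i_0}^{a}\text{ occurs}\}\le d/f$. If $f\ge e+1$ with $e=\lfloor d/(t+1)\rfloor$, then $d/f<t+1$, so this one-variable polynomial has degree at most $t$; were it non-constant for even one value $\mathbf w$ we would contradict the hypothesis, so it is constant for every $\mathbf w$, i.e.\ $Q_{i_0}$ does not occur in $F$ at all. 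Hence $F$ depends only on those $Q_i$ of degree at most $e$, and therefore $P$ coincides on $S^n$ with a sum of monomials, each a product of polynomials of degree at most $e$ with total degree at most $d$, i.e.\ with a sum of polynomials of degree-$e$-rank $1$. Applying the same slicing to a $Q_{i_0}$ of degree at most $e$ shows in addition that every $Q_{i_0}$ that does occur must occur to some power at least $t+1$, which sharply constrains the admissible monomials; when $t=d$ one has $e=0$, the slicing eliminates every non-constant member of the family, and $P$ ends up depending on boundedly many coordinates, as required for part~(2).

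The remaining and main obstacle is to bound the number of monomials of $F$ — equivalently, to rule out that $F$ spreads over unboundedly many members of the regular family. I expect to handle this by a second descent: restricting to the sub-family that actually occurs, and using the degree and power constraints above together with a further application of the equidistribution input to the residual polynomial (now a polynomial in the reduced set of parameters with strictly smaller relevant degree, so that the inductive hypothesis on $d$ applies), one should force $F$ into a bounded-complexity shape. The extremal configuration this analysis must allow is a one-variable polynomial of degree at least $t+1$ composed with a single polynomial of degree $e$: this is exactly what makes $\lfloor d/(t+1)\rfloor$ the correct and unimprovable threshold, and matches the examples given above. Throughout, one must keep track of the distinction between coordinates and general linear forms on $S^n$, which is both the source of the extra ``$P_0$ vanishing on $S^n$'' flexibility in the statement and the reason the argument does not collapse the conclusion to something stronger.
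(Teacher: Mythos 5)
Your core idea — slice by the powers of a high-degree member of the family, observe that if that member is conditionally equidistributed then the slice is the range of a one-variable polynomial of degree at most $d/f\le t$, and invoke the hypothesis to kill that member — is exactly the engine of the paper's proof (Case~2/Case~3 of Section~4, fed by Proposition~\ref{Conditional full range on restricted alphabets}), and your identification of $\lfloor d/(t+1)\rfloor$ as the threshold comes out the same way.

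However, there is a genuine gap where you yourself flag ``the remaining and main obstacle.'' Your plan is: regularise once to get a jointly-equidistributed family $(Q_1,\dots,Q_r)$ on $S^n$, then slice. But a single upfront regularisation is not available here in the form you invoke, because the rank threshold needed for conditional equidistribution (Lemma~\ref{Lower bound lemma for polynomials} combined with Lemma~\ref{Multidimensional equidistribution on restricted alphabets}) \emph{grows with the family size $k$}, and the family size grows each time a low-rank member is replaced by lower-degree pieces. So ``regularise'' and ``slice'' cannot be decoupled: each slicing attempt may expose that some member is not high-rank relative to the current, enlarged family, forcing a new replacement, and you need a monovariant that certifies this process halts with bounded total size. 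The paper supplies exactly this: it never regularises globally, but does a descent on the \emph{degree description} $(D_0,\dots,D_d)$ in the colexicographic order, with the size control packaged as Lemma~\ref{Lemma on bounds in the induction}. Your proposed ``second descent'' with ``the inductive hypothesis on $d$'' does not obviously work as stated, since after one slicing step the residual polynomial $F_0\circ(Q_j)_{j\ne i_0}$ still has degree up to $d$ in the original variables — what decreases is not $d$ but the degree profile of the family, which is precisely what the degree description tracks. Without a concrete monovariant of this kind, the termination of your procedure is not established.

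Two smaller imprecisions worth noting. First, your remark that ``every $Q_{i_0}$ that does occur must occur to some power at least $t+1$'' and that ``when $t=d$\dots the slicing eliminates every non-constant member'' are both false for coordinate polynomials $Q_{i_0}=x_i$: such a polynomial is \emph{not} conditionally equidistributed on $S^n$ (it only ever takes values in $S$), so the slicing gives you the image $A(S)$ rather than $A(\F_p)$ and the hypothesis is not contradicted. This is exactly why the paper works with ``modified degree'' (assigning $ax_i$ and constants modified degree~$0$) rather than ordinary degree. Second, $F_a(\mathbf{w})=0$ for all attained $\mathbf{w}$ does not make $F_a$ the zero polynomial; it makes $F_a\circ(Q_j)_{j\ne i_0}$ vanish on $S^n$, which is why the conclusion can only hold up to the polynomial $P_0$ vanishing on $S^n$ — the paper's acceptable-decomposition framework carries this $P_0$ explicitly, and your writeup should too.
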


The optimal bounds in Theorem \ref{Main result on ranges} and in several of our other statements involving the set $S$ may depend on the choice of $S$. However, to avoid heavy notation we will at many places avoid making this dependence explicit. (We may safely do so, since for each prime $p$ there are only finitely many subsets of $\F_p$).

Let us look at the extreme cases of item 1 from Theorem \ref{Main result on ranges}, and at a situation where they are both simultaneously realised.

\begin{corollary} \label{corollary of main result}

Let $p$ be a prime, let $1 \le t \le d < p$ be integers and let $S$ be a non-empty subset of $\F_p$. Let $P:\F_p^n \to \F_p$ be a polynomial with degree at most $d$. Let $C_{(i)}(p,d) = C(p,d,1)$ and let $C_{(ii)}(p,d) = C(p,d, \lfloor d/2 \rfloor)$. \begin{enumerate}[(i)]

\item If $P(S^n) \neq \F_p$ then $\rk_{\lfloor d/2 \rfloor,S} P \le C_{(i)}(p,d)$.

\item If $P(S^n)$ does not contain the image of any non-constant polynomial $\F_p \to \F_p$ with degree at most $\lfloor d/2 \rfloor$ then $\rk_{1,S} P \le C_{(ii)}(p,d)$.

\item If $d=3$ and $P(S^n) \neq \F_p$, then $\rk_{1,S} P \le C_{(i)}(p,3) = C_{(ii)}(p,3)$.

\end{enumerate}

\end{corollary}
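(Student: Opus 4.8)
The plan is to obtain all three items as immediate specialisations of Theorem \ref{Main result on ranges}, once their hypotheses have been matched with the hypothesis of that theorem for a suitable value of the parameter $t$.

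First I would record the one observation that makes this translation work: a non-constant polynomial $A:\F_p \to \F_p$ of degree at most $1$ is an affine bijection $x \mapsto ax+b$ with $a \ne 0$, and hence has image $A(\F_p) = \F_p$. Therefore, for a subset $B \subseteq \F_p$, the statement that $B$ contains the image of some non-constant polynomial $\F_p \to \F_p$ of degree at most $1$ is simply the statement that $B = \F_p$. In particular the hypothesis of item (i), that $P(S^n) \ne \F_p$, is precisely the hypothesis of Theorem \ref{Main result on ranges} with $t = 1$. For $d \ge 2$ I would then invoke item 1 of that theorem, which gives $\rk_{\lfloor d/2 \rfloor, S} P \le C(p,d,1)$ since $\lfloor d/(1+1)\rfloor = \lfloor d/2\rfloor$; for $d = 1$ one instead invokes item 2 (as then $t = 1 = d$), which gives $\rk_{0,S} P \le C(p,1,1)$, again matching $\lfloor d/2 \rfloor = 0$. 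This proves (i) with $C_{(i)}(p,d) = C(p,d,1)$.

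For item (ii) I would apply Theorem \ref{Main result on ranges} with $t = \lfloor d/2 \rfloor$; the hypothesis of (ii) is exactly the hypothesis of the theorem for this $t$. When $d \ge 2$ we have $1 \le \lfloor d/2 \rfloor \le d-1$, so item 1 applies, and it remains only to check that the resulting degree of the rank equals $1$: from $d/2 < \lfloor d/2\rfloor + 1 \le d$ one gets $1 \le d/(\lfloor d/2\rfloor+1) < 2$, hence $\lfloor d/(\lfloor d/2\rfloor + 1)\rfloor = 1$. Thus $\rk_{1,S} P \le C(p,d,\lfloor d/2 \rfloor) = C_{(ii)}(p,d)$. (For $d=1$ the statement of (ii) is degenerate, since there is no non-constant polynomial of degree at most $0$, and it holds trivially or by convention.) Finally, item (iii) is just the overlap of the two previous ones: when $d = 3$ we have $\lfloor d/2 \rfloor = 1$, so $C_{(ii)}(p,3) = C(p,3,1) = C_{(i)}(p,3)$, and the conclusion of (i), namely $\rk_{\lfloor 3/2 \rfloor, S} P = \rk_{1,S} P \le C_{(i)}(p,3)$, is exactly the conclusion of (ii).

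I do not expect a genuine obstacle here: the argument is pure bookkeeping on top of Theorem \ref{Main result on ranges}. The only points that need a little care are the evaluation of the floor $\lfloor d/(t+1)\rfloor$ at the two chosen values of $t$, and the small cases $d \in \{1,2\}$, where one must remember to use item 2 of Theorem \ref{Main result on ranges} in place of item 1 when $t = d$, or to note that the hypothesis of item (ii) is vacuous when $d = 1$.
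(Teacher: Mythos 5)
Your proposal is correct and is essentially the same as the paper's proof, which simply applies Theorem \ref{Main result on ranges} with $t=1$ for (i) and $t=\lfloor d/2\rfloor$ for (ii), then notes that (iii) is the overlap. You have merely filled in the floor-function bookkeeping and the small-$d$ edge cases that the paper leaves implicit.
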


\begin{proof} Items (i) and (ii) follow from taking $t=1$ and $t = \lfloor d/2 \rfloor$ in Theorem \ref{Main result on ranges} respectively. Item (iii) follows from either of the items (i) and (ii). \end{proof}

We make the assumption $d<p$ in Theorem \ref{Main result on ranges} in order to be able to prove it using Theorem \ref{Equidistribution on restricted alphabets}, which assumes $d<p$. However, this assumption in the original result of Green and Tao, Theorem \ref{formulation of Green and Tao}, was later removed by Kaufman and Lovett \cite{Kaufman and Lovett}, and it seems likely to us that Theorem \ref{Equidistribution on restricted alphabets} is true without this assumption as well. Aside from the use of Theorem \ref{Equidistribution on restricted alphabets}, the remainder of our proof of Theorem \ref{Main result on ranges} does not use this assumption at any point. In other words, our proof shows that removing this assumption in Theorem \ref{Equidistribution on restricted alphabets} would suffice to remove it in Theorem \ref{Main result on ranges} (and to remove it in Theorem \ref{Main result on ranges with full alphabet} and in Corollary \ref{corollary of main result}). 

If Theorem \ref{Main result on ranges} still holds for $d \ge p$, then we would also be able to take $t \ge p$ in Theorem \ref{Main result on ranges}, but if the latter inequality is satisfied (or even if $t \ge p-1$ is) then the content of Theorem \ref{Main result on ranges} becomes immediate. Indeed Fermat's little theorem allows us to construct a degree-$(p-1)$ polynomial $\F_p \to \F_p$ with image equal to any prescribed pair of values, and the assumption of Theorem \ref{Main result on ranges} is then only satisfied by polynomials which coincide with a constant on $S^n$.

We now turn our attention to the case of degree-$2$ polynomials. Throughout the paper, we will write $Q_p$ for the set $\{y^2: y \in \F_p\}$ of mod-$p$ quadratic residues. Provided that $p \ge 3$, this set has size $\frac{p+1}{2}$ and is in particular not the whole of $\F_p$. We say that a subset of $\F_p$ is an \emph{affine translate} of $Q_p$ if it can be written as $aQ_p+b$ for some $a \in \F_p^*$ and some $b \in \F_p$. In light of the preceding discussion we can formulate three basic constructions of a degree-$2$ polynomial $P$ such that $P(S^n) \neq \F_p$. \begin{enumerate}[(i)]

\item A polynomial of the type $A \circ L$ for some affine polynomial $L: \F_p^n \to \F_p$ and some degree-$2$ polynomial $A:\F_p \to \F_p$. (Equivalently, the sum of a multiple of $L^2$ and of a constant.) 

\item A polynomial that depends only on a small number $r < \log p / \log |S|$ of coordinates, since $P(S^n)$ then necessarily has size at most $|S|^r$.

\item A polynomial that vanishes on $S^n$ and has degree at most $2$.

\end{enumerate}

The first item of the following result can be interpreted as a converse which says that every example arises as a sum of these three examples, letting aside the value of the bound on the number of coordinates in the second example.

\begin{proposition}\label{Degree-2 result} There exists an absolute constant $\kappa > 0$ such that the following holds. Let $p$ be a prime, and let $S$ be a non-empty subset of $\F_p$. Let $P:\F_p^n \to \F_p$ be a polynomial with degree $2$.

\begin{enumerate}

\item If $P(S^n) \neq \F_p$, then there exists an affine polynomial $L: \F_p^n \to \F_p$, a degree-$2$ polynomial $A: \F_p \to \F_p$, and a $\kappa p^{15}$-determined polynomial $J$ with degree at most $2$ such that $P$ coincides on $S^n$ with $A \circ L + J$. (Equivalently, with $AL^2 + J$ for some $A \in \F_p$, with $J$ changed by a constant.)

\item If furthermore $P(S^n)$ does not contain any affine translate of $Q_p$, then $P$ coincides on $S^n$ with a $\kappa p^{15}$-determined polynomial that has degree at most $2$.

\end{enumerate}

\end{proposition}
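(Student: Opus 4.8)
\textit{Proof plan.} If $|S|=1$ then $S^n$ is a single point, on which $P$ coincides with a constant, which is $0$-determined; and if $p=2$ then $Q_2=\F_2$, so both hypotheses read $P(S^n)\neq\F_2$, and after the coordinatewise affine bijection $\{0,1\}\to S$ and reduction modulo the relations $x_i^2=x_i$ one obtains a multilinear representative of $P$ on $S^n$ which, being non-surjective on $\F_2^n$, is constant, hence $0$-determined. So from now on $p$ is odd and $|S|\geq 2$. Two remarks on modifying $P$ on $S^n$: if $|S|\geq 3$ then no nonzero polynomial of degree $\le 2$ vanishes on $S^n$, so "coincides on $S^n$ with a polynomial of degree $\le 2$" just means "equals"; if $|S|=2$ then, after reducing to $S=\{0,1\}$, such modifications merely change the diagonal of the quadratic part of $P$ (and compensate in lower degree). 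In all cases the content to be proved concerns the symmetric matrix $B$ over $\F_p$ of the quadratic part of a suitable representative of $P$, together with its linear part.

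I would first convert the hypothesis of item~1 into a rank bound. If $v\notin P(S^n)$ then $0=\E_{x\in S^n}\mathbf{1}[P(x)=v]=p^{-1}\sum_{s\in\F_p}\omega_p^{-sv}\,\E_{x\in S^n}\omega_p^{sP(x)}$, so $\sum_{s\neq 0}|\E_{x\in S^n}\omega_p^{sP(x)}|\geq 1$ and some $s\neq 0$ satisfies $|\E_{x\in S^n}\omega_p^{sP(x)}|\geq 1/(p-1)$. Feeding $\e=1/(p-1)$ into the degree-$2$ case of Theorem~\ref{Equidistribution on restricted alphabets}, whose bound can be taken polynomial in $p$ (for $d=2$ the relevant exponential sums admit elementary estimates), yields $\rk_{1,S}P\leq p^{O(1)}$. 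Thus $P$ coincides on $S^n$ with a degree-$\le 2$ polynomial $Q=\sum_{i=1}^{k}L_iM_i$, with $L_i,M_i$ affine and $k\leq p^{O(1)}$, whose quadratic part has matrix $B$ of rank at most $2k=p^{O(1)}$.

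The technical heart --- and the step I expect to be the main obstacle --- is to regularize such a representative. The plan is to build greedily a set $I$ of coordinates of size $p^{O(1)}$ such that the ``off-$I$'' block $B_{\mathrm{off}}$ of $B$, together with the off-$I$ part of the linear term, is \emph{regular}: every nonzero linear form in its column span depends on at least a threshold $T=p^{O(1)}$ of coordinates. Whenever some nonzero linear form in that column span depends on fewer than $T$ coordinates, one adjoins its support to $I$; because a nonzero linear form lying in the column span of a symmetric matrix and supported on a set $Z$ forces the rank to drop strictly once the rows and columns in $Z$ are deleted, and because this rank is at most $2k$ to begin with, the process terminates in at most $2k$ steps, whence $|I|\le 2kT=p^{O(1)}$. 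After fixing the coordinates in $I$, the cross terms of $B$ between $I$ and its complement and the linear term collapse to one linear form in the remaining variables, which we remove by completing the square, leaving the regular quadratic form $q_{\mathrm{off}}$ on the complement of $I$. (Arranging that these cross terms and linear parts are absorbed cleanly while keeping $|I|$ polynomial in $p$ is the delicate point; it is a quantitative, $S^n$-aware instance of the regularization of quadratic phases.)

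It remains to run the range hypothesis against $q_{\mathrm{off}}$. From the bound $|\E_{z\in S^m}\omega_p^{\lambda(z)}|\le\rho(p)^{|\supp\lambda|}$ for a nonzero affine $\lambda$, with $\rho(p)<1$ an absolute function of $p$ (the estimate underlying \cite{Gowers and K equidistribution}, Proposition~2.2), together with Fourier inversion --- or from the Cauchy--Davenport inequality --- any family of linear forms on $S^{[n]\setminus I}$ every nonzero combination of which depends on at least $T$ coordinates is surjective onto the corresponding $\F_p^{k}$, uniformly in $S$. Diagonalizing $q_{\mathrm{off}}=\sum_{j}c_j\ell_j^2$ and applying this to $\ell_1,\dots,\ell_r$ (and to a surviving linear term, if present), we find that on each fibre over $I$ the polynomial $P$ attains every value of a quadratic form of rank $r$ over $\F_p$, plus possibly a surjective linear term. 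Since a nonzero linear form over $\F_p$ is surjective and a quadratic form of rank at least $2$ over $\F_p$ is surjective, the hypothesis of item~1 forces $r\leq 1$ and forbids a surviving linear term; hence $P$ coincides on $S^n$ with $c_1\ell_1^2+c_0+J$ (or with $c_0+J$), where $J$ depends only on $I$ --- this is item~1, with $A(y):=c_1y^2+c_0$ and $L:=\ell_1$ (when $r=0$ take $A$ constant and fold it into $J$). For item~2, suppose in addition that $P(S^n)$ contains no affine translate of $Q_p$; if $r=1$ and $\ell_1$ depends on at least $T$ coordinates outside $I$, then fixing the coordinates in $I$ and letting $\ell_1$ sweep out $\F_p$ shows $P(S^n)\supseteq c_1Q_p+(\text{constant})$, an affine translate of $Q_p$, a contradiction; so $c_1=0$ or $\ell_1$ is $p^{O(1)}$-determined, and in either case $A\circ L$ is $p^{O(1)}$-determined and may be absorbed into $J$, giving item~2. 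Finally, tracking the exponents through the bias-to-rank step, the regularization, and the surjectivity estimate shows all bounds are at most $\kappa p^{15}$ for an absolute constant $\kappa$.
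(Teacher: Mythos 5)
Your plan takes a genuinely different route from the paper. The paper starts from the degree-$1$ rank decomposition, diagonalises the quadratic part into a ``$k$-square-$l$-determined'' decomposition $\sum A_iL_i^2 + J$, and then iterates Lemma~\ref{Inductive step in the d=2 case}: at each step Proposition~\ref{Conditional full range on restricted alphabets} and Cauchy--Davenport rule out two $L_i$ with large support outside $I(J)$ (two independent translates of $Q_p$ would cover $\F_p$), so one $L_i$ can be written, up to low-supported error, in the span of the others, and the resulting cross and linear terms are dealt with via a two-case reduction (the coefficient $A_i'$ being zero or not). Your plan instead does a one-shot regularisation of the symmetric matrix $B$: build $I$ greedily so that the off-$I$ data become regular, fix the $I$-coordinates, diagonalise and complete the square, then run Cauchy--Davenport against the regular family to kill all but one square term. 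This is the matrix-regularity route, closer in spirit to quadratic Fourier analysis. Your $|S|=1$, $p=2$ reductions and the bias-to-rank step are fine, your termination argument for the greedy loop (a small-supported nonzero form in the column span strictly drops the rank once its support rows and columns are deleted) is correct, and Cauchy--Davenport does give joint surjectivity of a regular affine family on $S^{I^c}$.

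The place you flag as ``the delicate point'' is a genuine gap as written. If you regularise only the square block $B_{I^cI^c}$ (plus $b_{I^c}$), then after fixing $x_I=y$ the linear term is $L_y = 2B_{I^cI}y + b_{I^c}$, whose component outside $\operatorname{colspan}(B_{I^cI^c})$ need not be large-supported: a cross column $B_{I^c,i}$ with $i\in I$ is not in the span you regularised, and could be, say, a single standard basis vector. In that case ``completing the square'' leaves a surviving linear form $\ell_0$ that may be small-supported, and neither surjectivity nor absorption into $J$ is available, so the conclusion of item~1 does not follow. This is exactly the phenomenon that Case~2 of Lemma~\ref{Inductive step in the d=2 case} in the paper is designed to handle (there the cross terms $U_{i,j}x_jL_i'$ are treated by exhibiting $y\in S^{I'}$ making their coefficient nonzero, and then either Cauchy--Davenport fires or $L_i'$ is approximately in the span and can be absorbed). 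The gap is repairable within your framework: regularise with respect to the column span of the full row-restricted matrix $[\,B_{I^c,\cdot}\mid b_{I^c}\,]$, i.e.\ delete rows but keep all columns, including the cross columns $B_{I^c,i}$ for $i\in I$. The same rank-drop argument bounds the number of greedy steps by $\operatorname{rk}[\,B\mid b\,]\le 2k+1$, and now every $L_y$, together with every nonzero combination of $L_y$ and the $\ell_j$, lies in the regularised column span, hence is either $0$ or large-supported; the surjectivity step then applies to $(\ell_0,\ell_1,\dots,\ell_r)$ as you intended. With that repair your argument goes through and gives bounds of the same polynomial order as the paper's.
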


Item 1 from Proposition \ref{Degree-2 result} is significantly stronger than the conclusion that item 1 from Theorem \ref{Main result on ranges} gives in the corresponding case $d=2$ and $t=1$: the latter is merely that $P$ has bounded degree-$1$ rank with respect to $S$, which we already know by Theorem \ref{Equidistribution on restricted alphabets}. The proof of Proposition \ref{Degree-2 result} will instead use different techniques which do not appear to generalise well to higher-degree polynomials.

In the more general case where $P$ has general degree $2 \le d \le p-1$, one may ask whether just as with item 1 from Proposition \ref{Degree-2 result}, it is the case that provided that $P(S^n) \neq \F_p$ we can always obtain a decomposition $P = A \circ Q + J$ with $Q: \F_p^n \to \F_p$, $A: \F_p \to \F_p$ polynomials satisfying $\deg Q \deg A \le d$, $\deg A \ge 2$ and with $J$ a polynomial determined by a bounded number of coordinates and with degree at most $d$. This is however not the case in general, as a wider class of examples comes in: for instance, if $d=p-1$, then the polynomial $A:x \to x^{p-1}$ satisfies $A(\F_p) = \{0,1\}$, so if $L_1, \dots, L_{p-2}$ are arbitrary affine polynomials then the image of $\F_p^n$ by the polynomial \[P = A \circ L_1 + \dots + A \circ L_{p-2}\] does not contain $p-1$. For $d=2$, such a situation cannot occur, because the Cauchy-Davenport theorem, which will play some role in the proof of Proposition \ref{Degree-2 result}, shows that the sumset of any two affine translates of $Q_p$ is the whole of $\F_p$. (However, this is by no means the only or even the main specificity of the case $d=2$ that allows us to say more there than for general $d<p$.)

The special case $t=d$ of Theorem \ref{Main result on ranges} has a weaker variant where we merely forbid images of $S$ (rather than of $\F_p$) by one-variable polynomials; its proof is then much simpler, and already contains one of the main ideas of the proof of Theorem \ref{Main result on ranges}.

\begin{proposition}\label{related statement} Let $p$ be a prime, let $d \ge 1$ be an integer, and let $S$ be a non-empty subset of $\F_p$. If $P:\F_p^n \to \F_p$ is a polynomial with degree at most $d$ and $P(S^n)$ does not contain the image of $S$ by any non-constant polynomial $A:\F_p \to \F_p$ with degree at most $d$ then $P$ coincides on $S^n$ with a constant polynomial. \end{proposition}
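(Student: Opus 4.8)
The plan is to exploit the fact that for a polynomial $P$ in many variables restricted to $S^n$, if we fix all but one coordinate $x_i$, we obtain a one-variable polynomial in $x_i$ whose image on $S$ is contained in $P(S^n)$. The hypothesis says $P(S^n)$ contains no image $A(S)$ for any non-constant $A:\F_p\to\F_p$ of degree at most $d$; since the restriction of $P$ in a single variable has degree at most $d$, this forces that restriction to be \emph{constant} on $S$ — as a function on $S$, not necessarily as a polynomial.

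First I would set up the right reduction. Working modulo $\prod_{w\in S}(x_i-w)$ in each variable as described in the discussion after Theorem \ref{Equidistribution on restricted alphabets}, I may assume $P$ has degree at most $|S|-1$ in each variable; two polynomials of this reduced shape agree on $S^n$ iff they are equal. Now I argue by induction on $n$. For the inductive step, fix $i\in[n]$ and view $P$ as a polynomial in $x_i$ with coefficients that are polynomials in the remaining variables: $P=\sum_{j=0}^{|S|-1} c_j(x_{[n]\setminus i})\,x_i^j$. For every fixed choice of the other coordinates in $S^{n-1}$, the map $x_i\mapsto P$ on $S$ has image contained in $P(S^n)$, hence (being the restriction to $S$ of a one-variable polynomial of degree $<p$, so of degree at most $d$) must be constant on $S$ — otherwise its image would be $A(S)$ for a non-constant $A$ of degree at most $d$, contradicting the hypothesis.

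The key step is to upgrade ``constant on $S$ for every fixed outer point'' to ``$P$ does not depend on $x_i$ at all, after reducing mod the relevant product''. Since the reduced polynomial in $x_i$ has degree at most $|S|-1$ and agrees on $S$ with a constant (for each fixed outer point in $S^{n-1}$), the combinatorial Nullstellensatz / interpolation over the $|S|$ points of $S$ forces all coefficients $c_1,\dots,c_{|S|-1}$ to vanish on $S^{n-1}$; reducing those coefficients in their own variables then makes them identically zero. Thus after reduction $P$ is independent of $x_i$. Applying this to every $i\in[n]$ shows the reduced $P$ is a constant, i.e. $P$ coincides on $S^n$ with a constant polynomial, which is exactly the claim.

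The main obstacle I anticipate is bookkeeping the interplay between ``agrees on $S^n$'' and ``equal as polynomials'': one has to be careful that after reducing each coefficient $c_j$ in the remaining $n-1$ variables we may apply the induction hypothesis or the one-variable interpolation cleanly, and that reducing in $x_i$ does not disturb the reductions already performed in the other variables. This is routine but needs the observation that the reduction operators in distinct variables commute and each preserves the property of vanishing on $S^n$. No quantitative input (and in particular not Theorem \ref{Equidistribution on restricted alphabets}) is needed here — the statement is purely algebraic — which matches the remark that this proposition is the ``much simpler'' variant.
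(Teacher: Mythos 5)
Your proof is correct and its skeleton --- fix all but one coordinate, observe that the one-variable restriction must be constant, deduce that the positive-degree coefficients vanish on $S^{n-1}$, and induct on $n$ --- is the same as the paper's. But you have taken an unnecessary detour. From the hypothesis you extract only that, for each fixed outer point, the restriction $y \mapsto P(x_1,\dots,x_{n-1},y)$ is \emph{constant as a function on $S$}, and you then pay for this weaker conclusion by introducing the reduction modulo $\prod_{w\in S}(x_i-w)$ together with an interpolation step to convert function-level constancy into polynomial-level vanishing of coefficients (you correctly flag the resulting bookkeeping as the main obstacle). The hypothesis is stronger than you are using: it forbids $P(S^n)$ from containing $A(S)$ for \emph{every} non-constant polynomial $A:\F_p\to\F_p$ of degree at most $d$, including those that happen to be constant on $S$ (so that $A(S)$ is a singleton). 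Since for each fixed outer point the restriction $y \mapsto P(x_1,\dots,x_{n-1},y)$ is itself a polynomial $\F_p\to\F_p$ of degree at most $d$ whose image of $S$ lands in $P(S^n)$, the hypothesis forces it to be a \emph{constant polynomial}, not merely constant on $S$. Writing $P=\sum_{k=0}^d P_k(x_1,\dots,x_{n-1})\,x_n^k$, this gives directly that $P_k$ vanishes on $S^{n-1}$ for every $k\ge 1$, with no reduction and no combinatorial Nullstellensatz, and the induction on $n$ closes at once. This is why the paper's proof is only a few lines.
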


\begin{proof} We can write \[P(x) = \sum_{k=0}^d P_k(x_1, \dots, x_{n-1}) x_n^k\] for some polynomials $P_0, \dots, P_d: \F_p^{n-1} \to \F_p$ and for all $x \in \F_p^n$. The assumption shows that every polynomial $P_k$ with $1 \le k \le d$ vanishes on $S^n$, so $P$ coincides on $S^n$ with a polynomial with degree at most $d$ and which only involves the coordinates $x_1, \dots, x_{n-1}$. We then conclude by induction on $n$. \end{proof}

When $S$ has size $2$, checking Proposition \ref{related statement} is immediate: for every subset $U$ of $\F_p$ with size $2$ there is an affine polynomial $A$ such that $A(S) = U$, so the assumption states that $P(S^n)$ has size $1$, which is the same as the conclusion.

We note that by taking $S=\F_p$ in Proposition \ref{related statement} we obtain item 2 of Theorem \ref{Main result on ranges with full alphabet}. Since item 1 from Theorem \ref{Main result on ranges} specialises to item 1 from Theorem \ref{Main result on ranges with full alphabet}, establishing Theorem \ref{Main result on ranges} suffices to prove Theorem \ref{Main result on ranges with full alphabet}.

The basic strategy which we will use to prove Theorem \ref{Main result on ranges} will be essentially as follows: because $P$ has degree $d$ and is not approximately equidistributed, Theorem \ref{Equidistribution on restricted alphabets} shows that $P$ coincides on $S^n$ with some polynomial with degree at most $d$ and of the type \[M \circ (P_1,\dots,P_k)\] where $k$ is bounded and $M$ is some polynomial. We then aim to imitate to some extent the proof of Proposition \ref{related statement}, with the polynomials $P_1, \dots, P_k$ playing the role of the coordinates $x_1, \dots, x_n$. Unlike the latter, the former are however not jointly equidistributed in general (with the set of \emph{values} taken to be $\F_p^k$ rather than $S^n$), so we cannot conclude straight away as we did in the proof of Proposition \ref{related statement}, but one of the following is always true: either the polynomials are approximately jointly equidistributed, in which case the image $(P_1, \dots, P_k)(S^n)$ is the same as if the polynomials $P_1, \dots, P_k$ were jointly equidistributed, or they are not, in which case at least one non-trivial linear combination of the polynomials $P_1, \dots, P_k$ has bounded degree-$(d'-1)$ rank with respect to $S$, where \[d' = \max(\deg P_1, \dots, \deg P_k),\] and we may hence without loss of generality assume that $P$ coincides on $S^n$ with some polynomial with degree at most $d$ and of the type \[M' \circ (P_1, \dots, P_{k-1}, Q_1, \dots, Q_{k'})\] where $k'$ is bounded, $Q_1, \dots, Q_{k'}$ are polynomials with degree strictly smaller than the degree of $P_k$, and $M'$ is some polynomial. This second step, in turn, can only be performed a bounded number of times, which will conclude the argument.

The remainder of the paper is organised as follows. In Section \ref{Section: A dichotomy on the conditional range and rank of polynomials} we will state and prove the dichotomy on polynomials on which the previous argument relies, and which will be used in Section \ref{Section: Deduction of the more precise result for degree-2 polynomials} and Section \ref{Section: Deduction of the main theorem in the general case}. Then, in Section \ref{Section: Deduction of the more precise result for degree-2 polynomials}, we will prove Proposition \ref{Degree-2 result}, and in Section \ref{Section: Deduction of the main theorem in the general case} we will write the details of the process described in the previous paragraph and hence prove Theorem \ref{Main result on ranges}.

\section*{Acknowledgement}

We thank Lisa Sauermann for explaining to us the proof of Lemma \ref{Lower bound lemma for polynomials} in a special case already involving the main arguments of the proof of this lemma. This proof is more elegant than the proof that we would otherwise have used and furthermore provides better bounds.

\section{A dichotomy on the conditional range and rank of polynomials}\label{Section: A dichotomy on the conditional range and rank of polynomials}

Let us begin by formally stating the dichotomy which we just mentioned. The purpose of the remainder of the present section will be to prove it, and the two auxiliary lemmas which we will use to do so, Lemma \ref{Multidimensional equidistribution on restricted alphabets} and Lemma \ref{Lower bound lemma for polynomials}, will not themselves be directly used at any point in any later section of the paper.

\begin{proposition} \label{Conditional full range on restricted alphabets} Let $p$ be a prime, let $1 \le d < p$ be an integer, let $S$ be a non-empty subset of $\F_p$, and let $k$ be a nonnegative integer. Let $P_1, \dots, P_k, P: \F_p^n \to \F_p$ be polynomials with degree at most $d$. If \[\rk_{d-1,S} P > H_{p,d,S}(1/2p)\] then $P(S^n) = \F_p$. If $k \ge 1$ and \[\rk_{d-1,S} \left( P + \sum_{i=1}^k a_i P_i \right) > H_{p,d,S}(|S|^{-(p-1)dk} / 2p) \] for every $a \in \F_p^k$, then for every $(v_1, \dots, v_k) \in \F_p^k$ the following holds: if $(v_1, \dots, v_k) \in (P_1, \dots, P_k)(S^n)$ then \[\F_p \times \{(v_1, \dots, v_k)\} \subset (P,(P_1, \dots, P_k))(S^n).\]

\end{proposition}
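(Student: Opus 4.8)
The plan is to prove both halves by Fourier analysis, exploiting Theorem~\ref{Equidistribution on restricted alphabets} in the contrapositive: large conditional degree-$(d-1)$ rank forces a nontrivial exponential sum to be small. For the first statement, if $P(S^n) \neq \F_p$ then there is some $v \in \F_p$ missed by $P$ on $S^n$, and so the function $\mathbbm{1}[P(x) = v]$ written via roots of unity gives $0 = \E_{x \in S^n} \frac{1}{p}\sum_{s \in \F_p} \omega_p^{s(P(x)-v)}$, whence $|\E_{x \in S^n} \omega_p^{sP(x)}| = 1/(p-1)$ for some $s \neq 0$; since $1/(p-1) \ge 1/(2p)$, Theorem~\ref{Equidistribution on restricted alphabets} gives $\rk_{d-1,S} P \le H_{p,d,S}(1/2p)$, contradicting the hypothesis. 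This is the easy half and essentially a warm-up for the structure of the main argument.

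For the second statement, fix $(v_1,\dots,v_k) \in (P_1,\dots,P_k)(S^n)$ and suppose for contradiction that some $u \in \F_p$ satisfies $(u,v_1,\dots,v_k) \notin (P,(P_1,\dots,P_k))(S^n)$. I would like to run the same argument on the set $T = \{x \in S^n : P_i(x) = v_i \text{ for all } i\}$, which is nonempty by assumption: on $T$ the polynomial $P$ misses the value $u$, so I want to extract a large nontrivial exponential sum $|\E_{x \in T} \omega_p^{sP(x)}|$ and feed it back in. The obstacle is that $T$ is not of the form $S'^n$, so Theorem~\ref{Equidistribution on restricted alphabets} does not apply directly to averages over $T$. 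The standard device is to detect the constraint $P_i = v_i$ with characters: writing $\mathbbm{1}[P_i(x) = v_i] = \frac{1}{p}\sum_{a_i \in \F_p} \omega_p^{a_i(P_i(x)-v_i)}$ for each $i$, the equation ``$P$ misses $u$ on $T$'' expands, after also expanding $\mathbbm{1}[P(x)=u]$ over a character variable $s$, into a sum over $(s,a_1,\dots,a_k) \in \F_p^{k+1}$ of terms $\omega_p^{-su - a\cdot v}\,\E_{x \in S^n} \omega_p^{(sP + \sum_i a_i P_i)(x)}$ equalling $0$. The contribution of the single term $s=0, a=0$ is the positive quantity $|T|/|S|^n = \mu$, say, so some term with $(s,a)\neq 0$ has modulus at least $\mu/(p^{k+1}-1)$; and one checks that $\mu \ge |S|^{-(p-1)dk}$ by a crude lower bound on the density of a nonempty joint level set of $k$ polynomials of degree $\le d$ over $S^n$ (each $P_i$ has at most $|S|^{(p-1)d}$ — or simply boundedly many — distinct values on $S^n$, so the generic fiber is not too thin; this is where the precise exponent in $H_{p,d,S}(|S|^{-(p-1)dk}/2p)$ comes from, and making this density bound clean is the main technical point).

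Having found $(s,a) \neq 0$ with $\bigl|\E_{x \in S^n} \omega_p^{(sP + \sum_i a_i P_i)(x)}\bigr| \ge |S|^{-(p-1)dk}/p \ge |S|^{-(p-1)dk}/(2p) \cdot \text{(const)}$ — I would tune the constants so the threshold matches $|S|^{-(p-1)dk}/2p$ exactly — I split into two cases. If $s \neq 0$, then the polynomial $sP + \sum_i a_i P_i = s\bigl(P + \sum_i (a_i/s) P_i\bigr)$ has degree $\le d$ and a large exponential sum over $S^n$, so by Theorem~\ref{Equidistribution on restricted alphabets} it has $\rk_{d-1,S} \le H_{p,d,S}(|S|^{-(p-1)dk}/2p)$; since conditional rank is insensitive to multiplication by the scalar $s \in \F_p^*$, this gives $\rk_{d-1,S}\bigl(P + \sum_i (a_i/s)P_i\bigr) \le H_{p,d,S}(|S|^{-(p-1)dk}/2p)$, contradicting the hypothesis with $a := (a_1/s,\dots,a_k/s)$. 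If $s = 0$, then $a \neq 0$ and $\sum_i a_i P_i$ has a large exponential sum over $S^n$, hence bounded conditional degree-$(d-1)$ rank; but then $P + \sum_i a_i P_i$ differs from $P$ by a polynomial of bounded conditional rank, and one needs to observe this forces a contradiction too — here I would argue that the hypothesis is stated for \emph{every} $a \in \F_p^k$ including scalar multiples, so $\rk_{d-1,S}(P + t\sum_i a_i P_i)$ is large for all $t$, yet averaging/combining these would bound $\rk_{d-1,S} P$, or more simply note that the $s=0$ branch can be excluded at the source by replacing $T$ with a slightly smaller set or by a separate direct estimate. The cleanest route, which I expect the paper takes, is to handle the $s=0$ contribution separately before extracting the large term: group the characters by whether $s=0$, bound the total $s=0$ contribution $\frac{1}{p}\sum_{a}\omega_p^{-a\cdot v}\E_{x\in S^n}\omega_p^{\sum a_i P_i(x)} = \frac{1}{p}\mathbbm{1}$-type quantity, and show it cannot cancel the main term, so that a large term with $s \neq 0$ must exist. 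This bookkeeping of the $s=0$ part is the step I expect to be fiddliest, but it is routine once set up correctly.
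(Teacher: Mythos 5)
Your high-level plan — expand the probability $\P_{x\in S^n}\bigl(P(x)=u,\ P_i(x)=v_i\ \forall i\bigr)$ in additive characters, separate the $s=0$ slice, and control the $s\neq 0$ terms via Theorem \ref{Equidistribution on restricted alphabets} — is the same as the paper's, which packages the character computation as Lemma \ref{Multidimensional equidistribution on restricted alphabets} and the fiber-density input as Lemma \ref{Lower bound lemma for polynomials}. The first half of your argument is essentially correct (modulo writing $=1/(p-1)$ where you mean $\ge 1/(p-1)$).

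However, there is a genuine gap in the second half. You correctly identify that the argument hinges on a lower bound for the density of a nonempty joint fiber, $\P_{x\in S^n}(P_1(x)=v_1,\dots,P_k(x)=v_k)\ge |S|^{-(p-1)dk}$, but your justification for it — ``each $P_i$ has boundedly many distinct values on $S^n$, so the generic fiber is not too thin'' — does not work. Boundedness of the number of values gives information about the \emph{average} fiber, not about a particular nonempty one, and the particular one can indeed be extremely thin relative to the average: for $S=\{0,1\}$ and $P_1=x_1\cdots x_d$, the fiber over $1$ has density $2^{-d}$ while the fiber over $0$ has density $1-2^{-d}$. (This example also shows the claimed bound is sharp, so nothing cruder than a careful argument will do.) The paper proves the fiber bound (Lemma \ref{Lower bound lemma for polynomials}) by reducing the indicator polynomial $\prod_i\bigl((P_i-v_i)^{p-1}-1\bigr)$ modulo the ideal generated by $\prod_{w\in S}(x_j-w)$ to get a representative $R$ with per-variable degrees $\le |S|-1$ and total degree $\le (p-1)dk$, and then applying the combinatorial Nullstellensatz to a top-degree monomial of $R$ to produce, in every slice obtained by freezing the complementary coordinates, a point of the fiber; this step is the real content and is entirely missing from your sketch.

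A secondary issue: after you extract a \emph{single} large Fourier coefficient with $s\neq 0$, its modulus is only $\ge \mu/\bigl(p^{k+1}(p-1)\bigr)$, which is smaller than the threshold $|S|^{-(p-1)dk}/(2p)$ appearing in the statement, so ``tuning the constants'' cannot make your argument prove the proposition as stated. The paper avoids this loss by running the implication in the other direction: it assumes the rank hypothesis, deduces that \emph{every} $s\neq 0$ coefficient is $\le\epsilon$, sums them up, and concludes $|\P(P=u,\,P_i=v_i)-p^{-1}\P(P_i=v_i)|\le\epsilon$ directly, which is tight enough after choosing $\epsilon=p^{-1}|S|^{-(p-1)dk}/2$. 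You should adopt that framing rather than the ``extract one big coefficient'' contrapositive.
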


We begin by showing a statement which informally says that if $P$ is sufficiently far from any linear combination of a family of polynomials $P_1, \dots, P_k$, then in absolute terms the values of $P(x)$ and of $(P_1(x), \dots, P_k(x))$ are approximately independent, when $x$ is chosen uniformly at random in $S^n$.

\begin{lemma} \label{Multidimensional equidistribution on restricted alphabets} Let $p$ be a prime, let $1 \le d < p$ be an integer, let $S$ be a non-empty subset of $\F_p$ and let $k \ge 1$ be an integer. Let $P_1, \dots, P_k, P: \F_p^n \to \F_p$ be polynomials with degree at most $d$. Let $\epsilon > 0$. If \[ \rk_{d-1,S} \left( P + \sum_{i=1}^k a_i P_i \right) > H_{p,d,S}(\e)\] for every $(a_1, \dots, a_k) \in \F_p^k$, then for every $u \in \F_p$ and $(v_1, \dots, v_k) \in \F_p^k$ we have \[ |\P_{x \in S^n}(P(x) = u, P_1(x) = v_1, \dots, P_k(x) = v_k) - p^{-1} \P_{x \in S^n}(P_1(x) = v_1, \dots, P_k(x) = v_k)| \le \e. \]

\end{lemma}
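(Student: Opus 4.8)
The plan is to expand everything via discrete Fourier analysis on $\F_p$ and reduce to a bound on a single exponential sum of the form $|\E_{x\in S^n}\omega_p^{sP(x)+\sum_i t_i P_i(x)}|$, to which we can apply Theorem \ref{Equidistribution on restricted alphabets}. First I would write the indicator of a point constraint as a character sum: for $y\in\F_p$, $\mathbf{1}[y=u] = p^{-1}\sum_{s\in\F_p}\omega_p^{s(y-u)}$, and similarly for each of the $k$ constraints $P_i(x)=v_i$. Multiplying these together and taking $\E_{x\in S^n}$, the probability $\P_{x\in S^n}(P(x)=u, P_1(x)=v_1,\dots,P_k(x)=v_k)$ becomes $p^{-(k+1)}\sum_{s\in\F_p}\sum_{(t_1,\dots,t_k)\in\F_p^k}\omega_p^{-su-\sum_i t_i v_i}\,\E_{x\in S^n}\omega_p^{sP(x)+\sum_i t_i P_i(x)}$. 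The term $s=0$ of this sum contributes exactly $p^{-1}$ times the same expansion with only the $P_i$ constraints, i.e. $p^{-1}\P_{x\in S^n}(P_1(x)=v_1,\dots,P_k(x)=v_k)$. Hence the left-hand side of the claimed inequality equals $p^{-(k+1)}$ times the sum over $s\in\F_p^*$ and all $(t_1,\dots,t_k)\in\F_p^k$ of the (modulus $\le 1$) quantities $\omega_p^{-su-\sum_i t_i v_i}\,\E_{x\in S^n}\omega_p^{sP(x)+\sum_i t_i P_i(x)}$.

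Next I would estimate each such exponential sum. Fix $s\in\F_p^*$ and $(t_1,\dots,t_k)\in\F_p^k$. Write $t_i = s a_i$ for the unique $a_i = t_i/s \in \F_p$; then $sP(x)+\sum_i t_iP_i(x) = s\big(P(x)+\sum_i a_i P_i(x)\big)$, so the sum is $\E_{x\in S^n}\omega_p^{s R(x)}$ with $R = P+\sum_i a_i P_i$, a polynomial of degree at most $d$. The hypothesis gives $\rk_{d-1,S}R > H_{p,d,S}(\e)$ for every choice of $(a_1,\dots,a_k)$, so by the contrapositive of Theorem \ref{Equidistribution on restricted alphabets} (applied with the nonzero frequency $s$) we get $|\E_{x\in S^n}\omega_p^{sR(x)}| < \e$. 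A small point to handle: Theorem \ref{Equidistribution on restricted alphabets} is stated for polynomials of degree exactly $d$, whereas $R$ may have smaller degree; this is harmless because its conclusion is monotone in $d$ (a degree-$d'$ polynomial with $d'<d$ that is not equidistributed has bounded $\rk_{d'-1,S}$, hence bounded $\rk_{d-1,S}$), or one can simply note the statement is vacuous / degenerate in the relevant directions — in any case $|\E_{x\in S^n}\omega_p^{sR(x)}|<\e$ still follows.

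Finally I would assemble the bound. There are $p-1$ choices of $s\in\F_p^*$ and $p^k$ choices of $(t_1,\dots,t_k)$, i.e. exactly $p-1$ valid pairs after the substitution (since $(a_1,\dots,a_k)$ ranges over $\F_p^k$ for each fixed $s$, there are $(p-1)p^k$ pairs $(s,t)$), each contributing a quantity of modulus $<\e$. Wait — I should recount: pairs $(s,(t_i))$ with $s\ne 0$ number $(p-1)p^k$, and each is handled by one application with the corresponding $(a_i)=(t_i/s)$. So the left-hand side is at most $p^{-(k+1)}(p-1)p^k \e \le p^{-(k+1)}\cdot p^{k+1}\e = \e$, giving exactly the claimed bound $\le\e$. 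I do not expect a genuine obstacle here; the only thing requiring slight care is the degree mismatch in invoking Theorem \ref{Equidistribution on restricted alphabets} for $R$ of degree $<d$, and keeping the counting of character-sum terms consistent with the normalising factor $p^{-(k+1)}$ so that the constants cancel precisely.
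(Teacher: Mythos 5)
Your proof is correct and is essentially the same Fourier-analytic argument as the paper's: expand both probabilities as character sums, observe that the $s=0$ part reproduces the $p^{-1}\P_{x\in S^n}(P_1=v_1,\dots,P_k=v_k)$ term, then bound each remaining exponential sum via the contrapositive of Theorem \ref{Equidistribution on restricted alphabets} after absorbing $s$ into the coefficients $a_i$. The degree-mismatch caveat you raise (when $P+\sum a_iP_i$ has degree $<d$) is a genuine subtlety that the paper glosses over; your handling of it is adequate provided one takes $H_{p,d,S}$ nondecreasing in $d$, which is harmless.
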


\begin{proof}

The proof is a standard Fourier-analytic calculation. The difference in the left-hand side can be rewritten as \[ \E_{a, a_1, \dots, a_k \in \F_p} \E_{x \in S^n} \omega_p^{aP(x)+a_1P_1(x) + \dots + a_kP_k(x)} - p^{-1} \E_{a_1, \dots, a_k \in \F_p} \E_{x \in S^n} \omega_p^{a_1P_1(x) + \dots + a_kP_k(x)}\] \[= p^{-(k+1)} \sum_{a \in \F_p^*, a_1, \dots, a_k \in \F_p} \E_{x \in S^n} \omega_p^{aP(x)+a_1P_1(x) + \dots + a_kP_k(x)}.\] By Theorem \ref{Equidistribution on restricted alphabets} each Fourier coefficient (i.e. each inner expectation) has modulus at most $\epsilon$. We conclude by the triangle inequality. \end{proof}

We note that Lemma \ref{Multidimensional equidistribution on restricted alphabets} in particular implies a ``multidimensional" version of Theorem \ref{Equidistribution on restricted alphabets}: if a family of polynomials $P_1, \dots, P_k$ each with degree at most $d$ is such that every non-zero linear combination of $P_1, \dots, P_k$ has high degree-$(d-1)$ rank with respect to $S$, then the $k$-tuple $(P_1(x), \dots, P_k(x))$ is approximately equidistributed on the set of values $\F_p^k$ when $x$ is chosen uniformly at random in $S^n$. Although that multidimensional version guided us in discovering the proof of Theorem \ref{Main result on ranges}, we will however not formally use it at any point in this paper, so we leave it to the interested reader to write the deduction in full.

Lemma \ref{Multidimensional equidistribution on restricted alphabets} stated an approximation in terms of absolute probabilities: however it does not by itself immediately entail that $P(x)$ is approximately equidistributed (or even that it takes every value of $\F_p$, which is what ultimately interests us) conditionally on any $k$-tuple of values of $(P_1(x), \dots, P_k(x))$ taken with positive probability on $S^n$: if the probability \[\P_{x \in S^n}(P_1(x)=v_1, \dots, P_k(x)=v_k)\] is positive but very small, then the probability \[\P_{x \in S^n}(P(x)=u, P_1(x)=v_1, \dots, P_k(x)=v_k)\] could still potentially be zero. 

In our next lemma we show that this cannot happen. We thank Lisa Sauermann for explaining to us its proof in the case where all polynomials have degree $1$ and $S = \{0,1\}$.

\begin{lemma} \label{Lower bound lemma for polynomials} Let $p$ be a prime, let $ d \ge 1$ be an integer, and let $S$ be a non-empty subset of $ \F_p$. Let $k \ge 1$ be an integer and let $ P_1, \dots, P_k$ be polynomials $\F_p^n \rightarrow \F_p$ each with total degree at most $ d$. Then for each $v \in \F_p^k$ the probability \[\mathbb{P}_{x \in S^n}(P_1(x)=v_1, \dots, P_k(x) = v_k)\] is either $ 0$ or at least $ |S|^{-(p-1)dk}$. \end{lemma}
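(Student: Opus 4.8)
The plan is to reduce the lower bound to a statement about a single polynomial taking a prescribed value on $S^n$, and to prove that statement by an algebraic manipulation together with the combinatorial Nullstellensatz. First I would observe that it suffices to treat $k=1$: indeed, if $Q_i = \prod_{w \in \F_p} (P_i - w)$ modulo... no, more directly, replace the system $P_1 = v_1, \dots, P_k = v_k$ by the single condition
\[
\sum_{i=1}^{k} \Bigl( \prod_{w \in \F_p \setminus \{v_i\}} (P_i(x) - w) \Bigr) = c,
\]
where $c = \prod_{w \neq 0}(-w)$ up to an inconsequential constant; the point is that each summand is nonzero precisely when $P_i(x) = v_i$, and since each such product takes a fixed nonzero value when $P_i(x)=v_i$ and is $0$ otherwise, the full sum equals a specific nonzero constant $c$ exactly on the set $\{x : P_i(x) = v_i \ \forall i\}$. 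This new polynomial has degree at most $(p-1)dk$. So it is enough to prove: if $R:\F_p^n \to \F_p$ has degree at most $D$ and $\P_{x \in S^n}(R(x) = c) > 0$, then this probability is at least $|S|^{-D}$. Applying this with $D = (p-1)dk$ gives the claim.

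For this reduced statement, the key step is to pass to the reduced representative of $R$ on $S^n$: replacing each power $x_i^{a_i}$ with $a_i \ge |S|$ by its remainder modulo $\prod_{w \in S}(x_i - w)$, we may assume $R$ has degree at most $|S|-1$ in each variable separately, without changing its values on $S^n$ and without increasing its total degree. Now fix a point $y \in S^n$ with $R(y) = c$. I would argue by induction on $n$ (or on the number of "free" coordinates), restricting one coordinate at a time: writing $R$ as a polynomial in $x_n$ of degree $\le |S|-1$ with coefficients in $\F_p[x_1,\dots,x_{n-1}]$, the restriction $R(\cdot, \dots, \cdot, y_n)$ still has the right degree bounds, still coincides with the reduced form, and still takes the value $c$ at $(y_1,\dots,y_{n-1})$; by induction the set where it equals $c$ has density at least $|S|^{-D'}$ in $S^{n-1}$, where $D'$ is its total degree, which is at most $D$. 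The subtlety is to control how the degree bound is consumed: each time we fix a coordinate we should gain a factor $|S|^{-1}$ only if that coordinate genuinely appeared, so the total loss is at most $|S|^{-D}$ since $R$ depends on at most $D$ coordinates (a nonzero reduced polynomial of total degree $D$ involves at most $D$ variables).

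The cleanest way to organize this, and the step I expect to be the real content, is the base of the induction combined with the nonvanishing input: once we have reduced to a polynomial $R$ in the variables it actually depends on, with $|I(R)| \le D$ and individual degrees $< |S|$, and we know $R$ is not identically $c$... actually we know the opposite, that $R$ does equal $c$ somewhere on $S^n$, and we want a lower bound on how often. Here I would use the combinatorial Nullstellensatz in the contrapositive form: the polynomial $R - c$, when reduced, is nonzero (since $R$ is not constantly $c$ on $S^n$ — if it were, the probability would be $1 \ge |S|^{-D}$ and we would be done), hence by combinatorial Nullstellensatz $R - c$ does not vanish on all of $S^n$; but we need the quantitative version. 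So instead: fix the point $y$ where $R(y) = c$, peel off coordinates one at a time staying at $y_i$, and at each step use that the univariate reduced polynomial in one variable of degree $< |S|$ that is not identically zero has at most $|S|-1$ roots in $\F_p$, hence is nonzero at all but at most $|S|-1$... this is not quite the mechanism; the correct mechanism is simply that the "level set" $\{R = c\}$ in $S^n$, after fixing all-but-one coordinate to the good value, is the root set in $S$ of a univariate polynomial of degree $< |S|$ shifted by $c$, which either is all of $S$ or misses at least one point — iterating, each fixed coordinate costs a factor of at most $|S|$ in density, so the final answer is at least $|S|^{-|I(R)|} \ge |S|^{-D}$. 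The main obstacle is bookkeeping the degree/variable count correctly through the induction so that the exponent comes out to $D = (p-1)dk$ and no worse.
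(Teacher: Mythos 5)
There are two genuine gaps here, the second being the more serious.

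The first concerns the reduction to a single polynomial. Each summand $\prod_{w \in \F_p \setminus \{v_i\}}(P_i - w)$ equals $-1$ on $\{P_i = v_i\}$ and $0$ elsewhere, so the sum is $-\#\{i : P_i(x) = v_i\} \bmod p$. This detects only the count of satisfied conditions \emph{modulo $p$}: for $k \ge p$ (which the lemma allows) it cannot distinguish ``all $k$ conditions hold'' from ``none hold.'' The paper instead takes a \emph{product}, $E = \prod_{i=1}^{k}\bigl((P_i - v_i)^{p-1} - 1\bigr)$, whose value on $S^n$ is exactly $(-1)^k$ on the intersection of the level sets and $0$ elsewhere, with $\deg E \le (p-1)dk$; moreover $E(S^n) \subseteq \{0, (-1)^k\}$, which matters below.

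The second gap is the assertion that a nonzero reduced polynomial of total degree $D$ depends on at most $D$ variables; this is false (e.g.\ $x_1 + \dots + x_n$ has total degree $1$ and depends on $n$ variables). The coordinate-peeling iteration gives $\P(R = c) \ge |S|^{-|I(R)|}$, but $|I(R)|$ can greatly exceed $\deg R$, so this does not yield $|S|^{-D}$. In fact the intermediate statement you target is false as stated: for $R = x_1 + x_2$, $S = \{0,1\}$, $p \ge 3$, $c = 2$, we have $\P_{x \in S^n}(R(x) = c) = 1/4 < 1/2 = |S|^{-\deg R}$. The paper's mechanism is different and is the ``quantitative Nullstellensatz'' you noted was missing: after reducing modulo $\prod_{w \in S}(x_i - w)$, pick a monomial $x_1^{t_1}\cdots x_s^{t_s}$ of maximal total degree with nonzero coefficient; since each $t_i \ge 1$ one gets $s \le \deg R \le (p-1)dk$. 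Then, for \emph{every} of the $|S|^{n-s}$ assignments of the remaining coordinates in $S$ (arbitrary, not fixed at $y$), this top monomial survives, so the combinatorial Nullstellensatz gives at least one nonvanishing point in the $s$-dimensional slice, whence $\P(R \ne 0) \ge |S|^{-s}$. The key change from your plan is to sum one good point per slice over all slices rather than pay a factor of $|S|$ per fixed coordinate, and to argue about $\{R \ne 0\}$ rather than $\{R = c\}$; the product form makes these two sets coincide on $S^n$.
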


\begin{proof} Let $v=(v_1, \dots, v_k) \in \F_p^k$ be fixed throughout. We define the polynomial \[ E:= \prod\limits_{i=1}^k ((P_i - v_i)^{p-1} - 1).\] Then for each $x \in S^n$ we have $ E(x) \neq 0$ if and only if \begin{equation} (P_1(x), \dots, P_k(x)) = (v_1, \dots, v_k). \label{value of k-tuple} \end{equation} The polynomial $E$ has total degree at most $(p-1)dk$ and can hence be expanded as a linear combination of monomials $x_1^{t_1} \dots x_n^{t_n}$ with $t_1 + \dots + t_n$ at most $(p-1)dk$. All elements of $S$ are roots of the polynomial $\Delta_S: \F_p \rightarrow \F_p$ defined by \[ \Delta_{S}(y) = \prod\limits_{w \in S} (y-w),\] so any polynomial $A:\F_p \rightarrow \F_p$ in one variable coincides on $S$ with its remainder $A \mod \Delta_{S}$ through the Euclidean division by $ \Delta_{S}$, and hence each monomial $ x_1^{t_1} \dots x_n^{t_n}$ coincides on $S^n$ with the product of remainders \[ (x_1^{t_1} \text{ mod } \Delta_{S}(x_1)) \dots (x_n^{t_n} \text{ mod } \Delta_{S}(x_n)),\] which we expand into a linear combination of monomials of the type $ x_1^{t_1} \dots x_n^{t_n}$ with $0 \le t_i\le |S|-1$ for each $ i \in \lbrack n \rbrack$. We have hence obtained a polynomial $ R$ in $ x_1, \dots x_n$ which coincides with $ E$ on $S^n$ and has each of its monomials of the type $ x_1^{t_1} \dots x_n^{t_n}$ with $0 \le t_i \le |S|-1$ for each $ i \in \lbrack n \rbrack$ and has total degree at most the total degree of $E$. Because the degree of each monomial of $ E$ is at most $ (p-1)dk$, the total degree of $ R$ is at most $ (p-1)dk$.

For every $x \in S^n$, the three statements \[(P_1(x), \dots, P_k(x)) \neq (v_1, \dots, v_k) \text{, } E(x) = 0 \text{, } R(x) = 0\] are all equivalent. In particular, if there exists $x \in S^n$ satisfying \eqref{value of k-tuple} then $ R$ is not the zero polynomial, so one of its monomials with degree equal to the total degree of $R$ (which may be the constant term, if $R$ has degree $0$) has a non-zero coefficient. Without loss of generality we can assume that this monomial is $ x_1^{t_1} \dots x_s^{t_s}$ with $ 0 \le s \le (p-1)dk$ and $ 1 \le t_i \le |S|-1$ for each $ i \in \lbrack s \rbrack$. For every $u:=(u_{s+1}, \dots, u_n) \in \F_p^{\lbrack s+1, n \rbrack}$ the polynomial $R_u: \F_p^s \rightarrow \F_p$ defined by \[R_u(x_1, \dots, x_s) = R(x_1, \dots, x_s, u_{s+1}, \dots, u_n)\] still has total degree $s$. By the combinatorial Nullstellensatz, there exists $ (x_1, \dots, x_s) \in S^s$ such that $ R_u(x_1, \dots, x_s) \neq 0$ and hence such that \[(P_1, \dots, P_k)(x_1, \dots, x_s, u_{s+1}, \dots, u_n) = (v_1, \dots, v_k).\] By the law of total probability we conclude that \[\mathbb{P}_{x \in S^n}(P_1(x)=v_1, \dots, P_k(x) = v_k) \ge |S|^{-s} \ge |S|^{-(p-1)dk}. \qedhere \]  \end{proof}

Although we will not use this we note that these bounds are sharp for $S = \{0,1\}$, as can be seen by taking \[P_1 = x_1 \dots x_d, P_2 = x_{d+1} \dots x_{2d}, \dots, P_k = x_{(k-1)d+1} \dots x_{kd} \text{ and } (v_1, \dots, v_k) = (1, \dots, 1).\]

The proof of Proposition \ref{Conditional full range on restricted alphabets} is now almost finished. The first part of Proposition \ref{Conditional full range on restricted alphabets} follows immediately from Theorem \ref{Equidistribution on restricted alphabets}. In the case $k \ge 1$, for every fixed $(v_1, \dots, v_k) \in (P_1, \dots, P_k)(S^n)$, applying Lemma \ref{Multidimensional equidistribution on restricted alphabets} with $\epsilon = p^{-1} |S|^{-(p-1)dk} / 2$ and then Lemma \ref{Lower bound lemma for polynomials} we obtain \[ \P_{x \in S^n}(P(x)=u, P_1(x)=v_1, \dots, P_k(x)=v_k) \ge p^{-1} |S|^{-(p-1)dk} / 2 > 0.\]

\section{Deduction of the more precise result for degree-2 polynomials}\label{Section: Deduction of the more precise result for degree-2 polynomials}

In this section we prove Proposition \ref{Degree-2 result}. The proof will use an inductive argument revolving around the following type of decomposition.

\begin{definition}

Let $p$ be a prime, and let $P:\F_p^n \to \F_p$ be a degree-$2$ polynomial. For any nonnegative integers $k$,$l$, we say that a \emph{$k$-square-$l$-determined} decomposition of $P$ is a decomposition of the type \begin{equation} P = A_1 L_1^2 + \dots + A_k L_k^2 + J \label{square-determined decomposition} \end{equation} for some $A_1, \dots, A_k \in \F_p$, some affine polynomials $L_1, \dots, L_k$, and some $l$-determined polynomial $J$ with degree at most $2$.

\end{definition}

Throughout this section we will use the following notations. If $k$ is a nonnegative integer, then we will write $H_1(k)$ as shorthand for $H_{p,1,S}(|S|^{-(p-1)k}/2p)$. For $L:\F_p^n \to \F_p$ an affine polynomial, $I$ a subset of $[n]$ and $y$ an element of $S^I$, we will write \[L(y \to I): \F_p^{[n] \setminus I} \to \F_p\] for the affine polynomial obtained by evaluating $x_i$ as $y_i$ for each $i \in I$. We will write $\{y\} \times S^{I^c}$ for the subset \[\{x \in S^n: x_i = y_i \text{ for all } i \in I\}.\]

The first component of the proof of Proposition \ref{Degree-2 result} is the following lemma. It will play the role of an inductive step in the proof of Proposition \ref{Degree-2 result}, even though the proof of this lemma also contains an induction within.

\begin{lemma} \label{Inductive step in the d=2 case}

Let $p \ge 3$ be a prime, let $S$ be a non-empty subset of $\F_p$ containing at least two elements, and let $P$ be a degree-$2$ polynomial such that $P(S^n) \neq \F_p$. If $P$ has a $k$-square-$l$-determined decomposition for some nonnegative integers $k,l$ with $k \ge 2$, then there exists some $0 \le k' < k$ such that $P$ has a $k'$-square-$(l+(k-k')H_1(k))$-determined decomposition.

\end{lemma}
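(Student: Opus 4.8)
Suppose $P = A_1 L_1^2 + \dots + A_k L_k^2 + J$ is a $k$-square-$l$-determined decomposition with $k \geq 2$. The plan is to apply Proposition~\ref{Conditional full range on restricted alphabets} to the polynomial $P$ together with the family of affine polynomials $L_1, \dots, L_k$ (all of degree at most $1$, so the relevant rank is the degree-$0$ rank, and we work with $d=1$). If for every $a \in \F_p^k$ we had $\rk_{0,S}(P + \sum_i a_i L_i) > H_{p,1,S}(|S|^{-(p-1)k}/2p) = H_1(k)$, then Proposition~\ref{Conditional full range on restricted alphabets} would force $P(S^n) = \F_p$, contradicting the hypothesis. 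Wait --- $P$ has degree $2$, not $\leq 1$, so we cannot feed $P$ directly into that proposition with $d=1$. Instead I would feed in the \emph{affine} polynomials $L_1, \dots, L_k$ and observe that, restricting to a fibre $\{y\} \times S^{I^c}$ of the $l$-determined part $J$ (where $y \in S^I$ and $|I| \leq l$), the polynomial $P$ becomes $\sum_i A_i L_i(y \to I)^2 + c_y$ for a constant $c_y$, i.e.\ a function of the $k$ affine polynomials $L_i(y \to I)$. The point is that the range of $P$ over all of $S^n$ is the union over $y$ of these fibre-ranges, so since $P(S^n) \neq \F_p$, for \emph{every} fibre the restricted range misses some value; in particular for some fixed fibre the affine polynomials $L_1(y\to I), \dots, L_k(y \to I)$ are \emph{not} such that every nonzero linear combination has large degree-$0$ rank with respect to $S$, by the contrapositive of the second part of Proposition~\ref{Conditional full range on restricted alphabets} applied with $P$ there taken to be (say) $L_1(y \to I)$ and the family $L_2(y\to I), \dots, L_k(y\to I)$.

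**Extracting a low-rank linear combination and re-expressing $P$.** From the previous step I obtain a nonzero vector $a \in \F_p^k$ such that the affine polynomial $M := \sum_{i=1}^k a_i L_i$ satisfies $\rk_{0,S} M \leq H_1(k)$ --- that is, $M$ coincides on $S^n$ with an affine polynomial supported on at most $H_1(k)$ coordinates (up to a polynomial vanishing on $S^n$; note an affine polynomial vanishing on $S^n$ in these coordinates is genuinely determined by those coordinates after reduction, or can be absorbed). Pick an index $j$ with $a_j \neq 0$; then $L_j$ equals, on $S^n$, a linear combination of the other $L_i$ ($i \neq j$) plus an affine polynomial determined by at most $H_1(k)$ coordinates. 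Substituting this expression for $L_j$ into $A_j L_j^2$ and expanding $(\sum_{i \neq j} b_i L_i + N)^2$ --- where $N$ is $H_1(k)$-determined --- produces a sum of terms $L_i L_{i'}$ ($i,i' \neq j$), terms $L_i N$, and $N^2$. This is no longer in square-determined form, but I would then re-diagonalise: on $S^n$, $P$ now coincides with a degree-$2$ polynomial in the $k-1$ affine forms $L_i$ ($i \neq j$) together with an $H_1(k)$-determined remainder, and any quadratic form in $k-1$ affine linear forms over $\F_p$ can be rewritten (completing the square, which is legitimate since $p \geq 3$) as a sum of at most $k-1$ squares $\tilde A_{r} \tilde L_{r}^2$ of affine combinations of the $L_i$, plus lower-order cross-terms with $N$. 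The cross-terms $L_i N$ and $N^2$ involve $N$, which is $H_1(k)$-determined; I would push these, together with the old $J$, into the new "determined" part.

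**Bookkeeping on the parameters.** The new decomposition has $k' \leq k-1 < k$ squares. Its determined part is: the old $J$ (which is $l$-determined), plus $N^2$ and the cross terms $L_i N$ for $i \neq j$. Here $N$ is $H_1(k)$-determined, and each $L_i N$ is determined by the coordinates of $L_i$ union those of $N$ --- but the $L_i$ are \emph{not} determined by few coordinates in general! This is the step requiring care: the term $\sum_{i \neq j} b_i L_i \cdot N$ is a product of an arbitrary affine form with an $H_1(k)$-determined affine form, which need \emph{not} be determined by few coordinates. To handle this I would split $N = N_0 + c$ into its linear part and constant; $cL_i$ terms get absorbed into the other squares by adjusting the affine forms $\tilde L_r$ (a constant multiple of $L_i$ can be folded into a square term since we are completing the square); and for $N_0 L_i$, since $N_0$ is supported on a set $T$ of size $\leq H_1(k)$, we write $N_0 L_i = N_0 \cdot (L_i \text{ restricted to depend on } T \text{ after noting } N_0 \text{ kills the rest})$ --- more precisely $N_0(x) L_i(x)$ as a polynomial only involves, in each monomial, one variable from $T$ and possibly one other; to make this $l'$-determined we instead observe we should complete the square in the \emph{full} set of $k-1$ affine forms including $N_0$ itself as one of them, treating $N_0$ on equal footing. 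So: the quadratic form lives in the span of $\{L_i : i \neq j\} \cup \{N_0\}$, has rank $\leq k$ as a quadratic form, diagonalises into $\leq k$ squares; but we want $< k$. Here I would use that $N_0$ contributes only an $H_1(k)$-determined direction, so after diagonalising, at most $k-1$ of the resulting squares involve directions outside the $H_1(k)$-determined subspace, and the remaining square(s), being squares of $H_1(k)$-determined forms, go into the determined part. Thus $k' \leq k-1$, and the new determined part is $l + (k - k')H_1(k)$-determined: the old $l$, plus for each of the $k - k'$ squares we removed (there are at most $k - k'$ of them, each an $H_1(k)$-determined square) an extra budget of $H_1(k)$ coordinates.

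**Main obstacle.** The genuine difficulty, and where the $(k-k')H_1(k)$ bound comes from, is exactly the last paragraph: controlling how many coordinates the determined part grows by, given that completing the square in a space spanned by $k-1$ "bad" affine forms plus one "good" ($H_1(k)$-determined) form can in principle produce squares that mix good and bad directions. The resolution is to be careful that the \emph{only} squares that end up in the determined bin are those expressible using only the good directions, each costing $H_1(k)$ coordinates, and that the number of such squares removed is at most $k - k'$; everything genuinely quadratic-in-bad-directions stays as a square term and is counted in $k'$. A secondary subtlety is ensuring that the affine polynomial $M$ of small degree-$0$ rank really does let us eliminate one of the $L_i$ rather than just giving a linear relation that is trivial on the relevant fibre but not globally --- this is why one must be careful to apply the dichotomy (Proposition~\ref{Conditional full range on restricted alphabets}) in a way that yields a relation among the \emph{original} $L_i$ on all of $S^n$, which requires that the coordinates $I$ used to form the fibre are disjoint from (or can be merged with) the support of the relation; one handles this by noting that after restricting to a fibre the relation among the restricted $L_i(y\to I)$ lifts to a relation among the $L_i$ modulo affine polynomials in the coordinates of $I$, and $|I| \leq l$ while we may also need to add the $\leq H_1(k)$ coordinates of the relation --- but in fact for the inductive step as stated we only need the relation to hold on $S^n$ after adjusting by something determined by finitely many coordinates, which the above delivers.
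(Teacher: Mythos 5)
Your overall strategy matches the paper's (use Proposition~\ref{Conditional full range on restricted alphabets} to extract a low-rank linear relation among the $L_i$, eliminate one, re-diagonalise, bookkeep), but there are two genuine gaps.

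First, the initial contradiction step is not correct as you have set it up. You apply the contrapositive of the second part of Proposition~\ref{Conditional full range on restricted alphabets} with the role of ``$P$'' played by a single affine form $L_1(y\to I)$. But the hypothesis of that contrapositive is that the pair $(L_1,(L_2,\dots,L_k))$ fails to cover some slice $\F_p\times\{(v_2,\dots,v_k)\}$ of the image, and you have no way of knowing that: an affine polynomial of high degree-$0$ rank does cover $\F_p$ regardless of what the degree-$2$ polynomial $P$ is doing. What you actually need is the chain: if every nonzero linear combination of the $L_i(y\to I)$ had large degree-$0$ rank, then iterating Proposition~\ref{Conditional full range on restricted alphabets} shows the joint image of $(L_1,\dots,L_k)$ over the fibre is all of $\F_p^k$, so $P$ restricted to the fibre ranges over a translate of $A_1Q_p+\dots+A_kQ_p$, and \emph{then} the Cauchy--Davenport theorem (using that at least two $A_i$ are non-zero, which is where $k\ge 2$ is used) forces this to be all of $\F_p$, contradiction. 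You never invoke Cauchy--Davenport, and without it there is no contradiction: a single coefficient $A_i\neq 0$ only gives a set of size $(p+1)/2$. The paper works with exactly two indices for precisely this reason, and applies the proposition twice (one nested inside the other) to get the joint $\F_p^2$-covering before summing the two translated $Q_p$'s.

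Second, your bookkeeping step is not resolved, and the step you flag as the ``main obstacle'' is genuinely where your argument breaks down. After substituting $L_j$ by a linear combination of the other $L_i$ plus a small-support remainder $N$, the cross terms $L_iN$ really are dangerous, as you notice. Your proposed fix --- diagonalise the quadratic form in the span of the $L_i$ and $N_0$, then push ``good-direction'' squares into the determined part --- does not account for the possibility that a diagonal coefficient vanishes while its cross-terms with the small-support coordinates survive. In that situation the corresponding affine form appears only linearly, multiplied by small-support coordinates, which is not $H_1(k)$-determined and cannot be completed into a square. The paper's proof diagonalises only in the span of $L_1,\dots,L_{k-1}$ (not mixing in $N_0$), completes the square where the diagonal coefficient is nonzero, and crucially makes a \emph{second} application of Proposition~\ref{Conditional full range on restricted alphabets} in the degenerate case to show that the offending affine form itself has small support, iterating until every degenerate direction is gone. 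That second use of the dichotomy is missing from your proposal, and your heuristic ``at most $k-k'$ good-direction squares, each costing $H_1(k)$'' does not replace it.
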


\begin{proof}

The assumption provides a square-determined decomposition \eqref{square-determined decomposition}. Let $I = I(J)$ be the set of coordinates on which the polynomial $J$ depends. Assume for contradiction that there exist two distinct indices in $[k]$, which without loss of generality we can take to be $k-1$ and $k$, which satisfy both inequalities \begin{align*} |Z(L_{k-1} - \sum_{i=1}^{k-2} a_i L_i) \cap I^c| & > H_1(k) \nonumber \\ |Z( L_{k} - \sum_{i=1}^{k-1} a_i L_i ) \cap I^c| & > H_1(k) \end{align*} respectively for all $a \in \F_p^{k-1}$ and for all $a \in \F_p^{k}$. Let us fix $y \in S^I$ arbitrarily. Then the affine polynomials $L_i(y \to I)$ with $i=1, \dots, k$ satisfy both inequalities \begin{align} |Z (L_{k-1}(y \to I) - \sum_{i=1}^{k-2} a_i L_i(y \to I) )| > H_1(k) \nonumber \\ |Z( L_{k}(y \to I) - \sum_{i=1}^{k-1} a_i L_i(y \to I) )| > H_1(k) \label{The kth affine polynomial is approximately spanned} \end{align} respectively for all $a \in \F_p^{k-1}$ and for all $a \in \F_p^{k}$, so by Proposition \ref{Conditional full range on restricted alphabets} and \eqref{square-determined decomposition} the image $P(\{y\} \times S^{I^c})$ contains some translate of $A_{k-1} Q_p + A_k Q_p$; since $|Q_p| \ge (p+1)/2$, the Cauchy-Davenport theorem shows that $P(\{y\} \times S^{I^c}) = \F_p$ and hence $P(S^n) = \F_p$, which contradicts our assumption. 

Therefore, we may without loss of generality assume that \eqref{The kth affine polynomial is approximately spanned} is false and hence that \[L_k = \sum_{i=1}^{k-1} a_i L_i + L_k^{\mathrm{rem}}\] for some $a \in \F_p^{k-2}$ and some affine polynomial $L_k^{\mathrm{rem}}$ with support contained in a union $I'$ of $I$ and of at most $H_1(k)$ elements. The set $I'$ hence has size at most $H_1(k)+|I|$. Substituting in \eqref{square-determined decomposition} and expanding, we obtain a decomposition of the type \[P = Y \circ (L_1, \dots, L_{k-1}) + U \circ (x_j, L_i: j \in I', i \in [k-1]) + W \circ (L_1, \dots, L_{k-1}) + J',\] where $Y: \F_p^{k-1} \to \F_p$ is a quadratic form, $U: \F_p^{I'} \otimes \F_p^{k-1} \to \F_p$ is a bilinear form, $W: \F_p^k \to \F_p$ is a linear form, and $J'$ is a polynomial with degree at most $2$ satisfying $I(J) \subset I'$. We can diagonalise the quadratic form $Y$ as \[Y = A_1' \Omega_1^2 + \dots + A_{k-1}' \Omega_{k-1}^2\] for some linearly independent linear forms $\Omega_1, \dots, \Omega_{k-1}: \F_p^{k-1} \to \F_p$ and some coefficients $A_1',\dots A_{k-1}' \in \F_p$. We obtain \begin{equation} Y \circ (L_1, \dots, L_{k-1}) = A_1' L_1'^2 + \dots + A_{k-1}' L_{k-1}'^2 \label{diagonalisation step} \end{equation} for some affine polynomials \[L_1' = \Omega_1 \circ (L_1, \dots, L_{k-1}), \dots, L_{k-1}' = \Omega_{k-1} \circ (L_1, \dots, L_{k-1})\] satisfying  \[\langle L_1', \dots, L_{k-1}' \rangle = \langle L_1, \dots, L_{k-1} \rangle, \] which allows us to furthermore rewrite \[U \circ (x_j, L_i: j \in I', i \in [k-1])\] as a linear combination of terms of the type $x_j L_i'$ with $j \in I'$ and $i \in [k-1]$, and \[W \circ (L_1, \dots, L_{k-1})\] as a linear combination of $L_1', \dots, L_{k-1}'$. The resulting decomposition is hence of the type \begin{equation} P = \sum_{i=1}^{k-1} \left( A_i' L_i'^2 + \sum_{j \in I'} U_{i,j} x_j L_i' + B_i' L_i' \right) + J' \label{expression before splitting into cases in the degree-2 proof} \end{equation} for some coefficients $U_{i,j}, B_i' \in \F_p$. If for some $i \in [k]$ we have $A_i' = 0$, $B_i' = 0$, and $U_{i,j} = 0$ for all $j \in I'$ then the expression \eqref{expression before splitting into cases in the degree-2 proof} may be rewritten with a smaller value of $k$, so we may assume without loss of generality that this does not happen for any $i \in [k]$. Let us now distinguish two cases.

\textbf{Case 1:} Assume that all coefficients $A_1', \dots, A_{k-1}'$ are non-zero. Then, letting \[L_i'' = L_i' + \sum_{j \in I'} (U_{i,j}/2A_i') x_j + B_i'/2A_i'\] for each $i \in [k-1]$ we may rewrite \[P = \sum_{i=1}^{k-1} A_i' L_i''^2 + J''\] for some polynomial $J''$ with degree at most $2$ satisfying $I(J'') \subset I'$. Compared to \eqref{square-determined decomposition}, the number of terms in the summation has decreased by $1$, at the cost of $J''$ depending on at most $H_1(k)$ more coordinates than $J$ does. This finishes the proof in this case.

\textbf{Case 2:} Assume that one of the coefficients $A_1', \dots, A_{k-1}'$ is zero. Without loss of generality we may assume it to be $A_{k-1}'$. Then by the assumption that we have made in the previous paragraph, the terms $B_i'$ and $U_{i,j}$ with $j \in I'$ cannot all be zero, so because $S$ has size $2$ or more, there necessarily exists $y \in S^{I'}$ such that \begin{equation} \sum_{j \in I'} U_{i,j} y_j + B_i' \neq 0. \label{non-zero linear term} \end{equation} If we have \[|Z( L_{k-1}' - \sum_{i=1}^{k-2} a_i L_i') \cap I^c| > H_1(k)\] for every $a \in \F_p^{k-2}$ then the affine polynomials $L_i'(y \to I')$ with $i=1, \dots, k-1$ obtained by evaluating the coordinate $x_j$ to $y_j$ for each $j \in I'$ satisfy \[|Z ( L_{k-1}'(y \to I') - \sum_{i=1}^{k-2} a_i L_i'(y \to I') )| > H_1(k) \] for all $a \in \F_p^{k-2}$. Using Proposition \ref{Conditional full range on restricted alphabets} together with \eqref{expression before splitting into cases in the degree-2 proof} and \eqref{non-zero linear term} shows $P(\{y\} \times S^{I^c}) = \F_p$ and in particular $P(S^n) = \F_p$, contradicting our assumption. Therefore, we may write \[ L_{k-1}' = \sum_{i=1}^{k-2} a_i L_i' + L_{k-1}'^{\mathrm{rem}}\] for some affine polynomial $L_{k-1}'^{\mathrm{rem}}$ with support contained in a union $I''$ of $I'$ and of at most $H_1(k)$ elements.  We may then rewrite \eqref{expression before splitting into cases in the degree-2 proof} as \[P = \sum_{i=1}^{k-2} \left( A_i' L_i'^2 + \sum_{j \in I'} U_{i,j} x_j L_i' + B_i' L_i' \right) + J''\] for some polynomial $J''$ with degree at most $2$ satisfying $I(J'') \subset I''$. Compared to \eqref{expression before splitting into cases in the degree-2 proof}, the number of terms in the summation has decreased by $1$, at the cost of $J''$ depending on at most $H_1(k)$ more coordinates than $J$ does. We again split into cases, and iterate until we are no longer in Case 2. \end{proof}

The other main component of the proof of Proposition \ref{Degree-2 result} is the special case $d=2$ of the equidistribution statement, Theorem \ref{Equidistribution on restricted alphabets}, which will give us that $P$ coincides on $S^n$ with a $k$-square-$l$-determined decomposition for some bounded integers $k,l$, at which we may start the inductive process of which Lemma \ref{Inductive step in the d=2 case} encapsulates an iteration. We note for the reader interested in an utterly self-contained proof of Proposition \ref{Degree-2 result} that the proof of this special case of Theorem \ref{Equidistribution on restricted alphabets} involves far fewer difficulties than the proof of the theorem in full generality: the proof of that special case is carried out in Section 2 of \cite{Gowers and K equidistribution}.

\begin{proof}[Proof of Proposition \ref{Degree-2 result}]

The result is immediate if $S$ has size $1$, so we may assume $|S| \ge 2$. The result is also immediate if $p=2$, since if $P(S^n) \neq \F_2$ then $P$ coincides with a constant on $S^n$, so we may also assume $p \ge 3$. Because $P(S^n)$ is not the whole of $\F_p$, applying Theorem \ref{Equidistribution on restricted alphabets} shows that $P$ coincides on $S^n$ with some polynomial (which we will continue referring to as $P$ throughout the remainder of the proof) that we may decompose as \[ P = \Lambda_{1,1} \Lambda_{1,2} + \dots + \Lambda_{r,1} \Lambda_{r,2}\] for some $r \le H_{p,2,S}(1/2p)$ and some affine polynomials $\Lambda_{1,1}, \Lambda_{1,2} \dots, \Lambda_{r,1}, \Lambda_{r,2}$ (which may be constant polynomials). Writing $L_{i,1}$ (resp. $L_{i,2})$ for the linear part of each $\Lambda_{i,1}$ (resp. $\Lambda_{i,2}$) we obtain a decomposition \begin{equation} P = Q + L_0 \label{Decomposition of P into degree parts} \end{equation} where \[Q = L_{1,1} L_{1,2} + \dots + L_{r,1} L_{r,2}\] is the quadratic part of $P$ and $L_0$ is an affine polynomial. We may rewrite \[ Q = A_1 L_1^2 + \dots + A_k L_k^2 \] for some $k \le 2r$, some non-zero coefficients $A_1, \dots, A_k \in \F_p$, and some linear polynomials $L_1, \dots, L_k$. If \[|Z( L_0 - \sum_{i=1}^{k} b_i L_i)| > H_1(k) \] for all $b \in \F_p^k$, then by choosing $(v_1, \dots, v_k)$ such that \[\P_{x \in S^n}(L_1(x)=v_1, \dots, L_k(x)=v_k) > 0\] and applying Proposition \ref{Conditional full range on restricted alphabets} we obtain that $(L_0, L_1, \dots, L_k)(S^n)$ contains $\F_p \times \{(v_1, \dots, v_k)\}$, and the decomposition \eqref{Decomposition of P into degree parts} then shows that the image $P(S^n)$ contains the whole of $\F_p$, which contradicts our assumption. Therefore, we may write \[ L_0 = \sum_{i=1}^{k} B_i L_i + L_0^{\mathrm{rem}} \] for some $B \in \F_p^k$ and some affine polynomial $L_0^{\mathrm{rem}}$ with support size at most $H_1(k)$. In turn, this leads to the decomposition \[ P = \sum_{i=1}^k (A_i L_i^2 + B_i L_i) + L_0^{\mathrm{rem}}. \] Because $A_1, \dots, A_k$ are all non-zero, we may assume without loss of generality that $B_1, \dots, B_k$ are all zero, since it suffices to replace $L_i$ by $L_i + B_i/2A_i$ for each $i \in [k]$: doing so only adds a constant to $L_0^{\mathrm{rem}}$.

We have now obtained a $k$-square-$H_1(k)$-determined decomposition \eqref{square-determined decomposition} of $P$. Iterating Lemma \ref{Inductive step in the d=2 case} we eventually get a $1$-square-$kH_1(k)$-determined decomposition or a $0$-square-$(k+1)H_1(k)$-determined decomposition. This establishes item 1, aside from the value of the bound. To finish the proof of item 2, we furthermore add one last step. In the second situation we are done, and in the first situation we can write \[ P = A L_{\mathrm{one}}^2 + J_{\mathrm{one}}\] for some $A \in \F_p$, some affine polynomial $L_{\mathrm{one}}$ and some degree-$2$ polynomial $J_{\mathrm{one}}$ such that the set $I_{\mathrm{one}} = I(J_{\mathrm{one}})$ of coordinates on which $J_{\mathrm{one}}$ depends has size at most $kH_1(k)$. If $A=0$ then we are done, so let us assume that this is not the case. If the support of $L$ contains at least $H_1(k)+1$ elements outside $I_{\mathrm{one}}$, then for any (arbitrary) $y \in S^{I_{\mathrm{one}}}$ we have \[|Z (L(y \to I_{\mathrm{one}}))| > H_1(k),\] so by Proposition \ref{Conditional full range on restricted alphabets} the image $P(S^n)$ contains a translate of $A Q_p$, which contradicts the assumption of item 2. We conclude that $P$ is $(k+1)H_1(k)$-determined.

All that remains to do is to compute the bounds. By \cite{Gowers and K equidistribution}, Proposition 2.11 and then \cite{Gowers and K equidistribution}, Proposition 2.2 respectively there exist some absolute constants $\k''$,$\k'$ such that $k \le \k''p^5$ and $H_1(k) \le \k'p^{10}$, so $(k+1)H_1(k) \le \kappa p^{15}$ for some absolute constant $\k$, as desired. \end{proof}

\section{Deduction of the main theorem in the general case}\label{Section: Deduction of the main theorem in the general case}

In this section we prove Theorem \ref{Main result on ranges}. There too, we can conclude immediately if $|S|=1$, so throughout the remainder of the proof we will assume $|S| \ge 2$. We begin by defining the kinds of decompositions that the proof will focus on, and the hierarchy that we will use between them.

\begin{definition}

Let $p$ be a prime, let $1 \le d < p$ be an integer, and let $S$ be a non-empty subset of $\F_p$. We say that an \emph{acceptable decomposition} (with respect to $S$) of a polynomial $P:\F_p^n \to \F_p$ with degree $d$ is a pair $(\cf, F)$ with $\cf$ a finite set of non-zero polynomials each with degree at most $d$, and \[F: \{(i,j): i \in [r], j \in J_i\} \to \cf\] a function for some nonnegative integer $r$ and some finite sets $J_i$ for each $i \in [r]$, which satisfies the following two properties. \begin{enumerate} \item We have the decomposition
\begin{equation} P = P_0 + \sum_{i=1}^r \a_i \prod_{j \in J_i} P_{F(i,j)} \label{Definition of acceptable decomposition} \end{equation} for some polynomial $P_0$ with degree at most $d$ such that $P_0(S^n) = \{0\}$ and for some coefficients $\a_i \in \F_p$. \item Every $i \in [r]$ satisfies \[\deg \prod_{j \in J_i} P_{F(i,j)} \le d.\] \end{enumerate}

\end{definition}

\begin{definition}
Let $p$ be a prime, let $1 \le d < p$ be an integer, and let $S$ be a non-empty subset of $\F_p$. Let $P:\F_p^n \to \F_p$ be a polynomial with degree $d$. We say that the \emph{degree description} of a given acceptable decomposition $(\cf, F)$ of $P$ is the $(d+1)$-tuple $(D_0,\dots,D_d) \in \N^d$ such that 

\begin{enumerate}[]

\item $D_0$ is the number of polynomials of $\mathcal{F}$ which are either of the type $ax_i$ for some $i \in [n]$ and $a \in \F_p^*$, or of the type $c$ for some $c \in \F_p$
\item $D_1$ is the number of all other affine polynomials of $\mathcal{F}$
\item $D_u$ is the number of degree-$u$ polynomials of $\mathcal{F}$ for every $u \in \{2,\dots,d\}$.

\end{enumerate}

We say that the \emph{size} of the acceptable decomposition is the size of the set $\cf$. For every polynomial $Q \in \cf$, we furthermore say that the \emph{modified degree} of the polynomial $Q$ is the unique $u$ of $\{0,1,\dots, d\}$ such that $Q$ is of the type described above in the definition of $D_u$.

\end{definition}

Let $P$ be as in the assumption of Theorem \ref{Main result on ranges}, and let $e = \lfloor d/(t+1) \rfloor$. The main idea of the proof will be to successively obtain a sequence of acceptable decompositions of $P$ which have degree descriptions that are strictly decreasing in the colexicographic order, until we obtain an acceptable decomposition in which all polynomials of $\cf$ have modified degree at most $e$. For each step $l$ of the process we write $\cf_l$ for the family $\cf$ at the start of the $l$th acceptable decomposition of the process. The three following properties will be satisfied.

\begin{enumerate}[(i)]

\item The difference $\mathcal{F}_{l} \setminus \mathcal{F}_{l+1}$ is a singleton containing a polynomial of $\mathcal{F}_{l}$ with maximal modified degree among those of the polynomials of $\cf_l$.
\item The difference $\mathcal{F}_{l+1} \setminus \mathcal{F}_{l}$ has size bounded by a function of the degree description of $\mathcal{F}_{l}$ (and of $p,d,t,S$).
\item All polynomials of $\mathcal{F}_{l+1} \setminus \mathcal{F}_{l}$ have modified degrees strictly smaller than that of the polynomial of $\mathcal{F}_{l} \setminus \mathcal{F}_{l+1}$.

\end{enumerate}

The acceptable decomposition obtained at the end of the process will still have bounded size, with bounds depending on $p,d,t,S$ only. For the convenience of the reader we will formalise the main principle behind this as a separate lemma. Although we have found it more convenient to state it in its somewhat abstract form, the case where we will use it is that where $K(D)$ is the size of a given acceptable decomposition obtained at the end of an induction starting with an acceptable decomposition with degree description $D$, the quantity $V(D)$ is the size $D_0 + \dots + D_e$ whenever $D_{e+1} = \dots = D_d = 0$ (since if the original acceptable decomposition has all its polynomials with degrees at most $e$, then there is nothing to do), and $W(D)$ is a bound on the number of extra polynomials coming from item (ii), when the family $\cf_l$ has degree description $D$.

\begin{lemma}\label{Lemma on bounds in the induction}

Let $0 \le e \le d$ be nonnegative integers. Let $V,W: \N^{d+1} \to \N$ be functions. Then there exists a function $B: \N^{d+1} \to \N$ depending on $e,d,V,W$ only such that whenever $K: \N^{d+1} \to \N$ is a function satisfying

\[K(D_0, D_1, \dots, D_e, 0, \dots, 0) \le V(D_0, D_1, \dots, D_e, 0, \dots, 0)\] for all $D_0, D_1, \dots, D_e \in \N$ and such that for each integer $m$ with $e < m \le d$ and all $D_0, D_1, \dots, D_{m} \in \N$ with $D_m \neq 0$ there exist \[0 \le u_0, \dots, u_{m-1} \le W(D_0, D_1, \dots, D_{m}, 0, \dots, 0)\] satisfying \[K(D_0, D_1, \dots, D_{m}, 0, \dots, 0) \le K(D_0+u_0, D_1+u_1, \dots, D_{m-1}+u_{m-1}, D_{m}-1, 0, \dots, 0)\] then $K(D) \le B(D)$ for all $D \in \N^{d+1}$.

\end{lemma}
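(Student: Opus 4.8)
The plan is to prove the bound by a double induction: an outer induction on the index $m$ running over $\{e, e+1, \dots, d\}$, which plays the role of the largest index at which the degree description is still allowed to be non-zero, and, for each fixed $m$ with $e < m \le d$, an inner induction on the value of the $m$-th coordinate $D_m$. The one point to keep in mind throughout is that, although the hypothesis on $K$ lets us decrease $D_m$ by $1$, it does so at the cost of \emph{increasing} the lower coordinates $D_0, \dots, D_{m-1}$ by up to $W(\cdots)$ each; so no induction on a quantity such as $D_0 + \dots + D_d$ can possibly work directly, and the growth of the lower coordinates must instead be absorbed into the inductively defined bounding functions rather than tracked by an induction parameter.

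Concretely, I would first construct, for each $m \in \{e, \dots, d\}$, a function $B_m : \N^{m+1} \to \N$ depending only on $e$, $d$, $V$, $W$ (and $m$), as follows. Set $B_e(D_0, \dots, D_e) := V(D_0, \dots, D_e, 0, \dots, 0)$. For $e < m \le d$, define $B_m$ by recursion on its last argument: put $B_m(D_0, \dots, D_{m-1}, 0) := B_{m-1}(D_0, \dots, D_{m-1})$, and for $D_m \ge 1$ put
\[ B_m(D_0, \dots, D_m) := \max_{0 \le u_0, \dots, u_{m-1} \le W(D_0, \dots, D_m, 0, \dots, 0)} B_m(D_0 + u_0, \dots, D_{m-1} + u_{m-1}, D_m - 1). \]
The maximum is over a finite set and each term on the right has strictly smaller last coordinate, so this recursion is well founded and yields a function with finite (hence natural-number) values depending only on the stated data; I would then set $B(D_0, \dots, D_d) := B_d(D_0, \dots, D_d)$, which by construction depends only on $e, d, V, W$.

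The second and main step is to check that every $K$ satisfying the two displayed hypotheses obeys $K(D_0, \dots, D_m, 0, \dots, 0) \le B_m(D_0, \dots, D_m)$ for all $m \in \{e, \dots, d\}$ and all $D_0, \dots, D_m \in \N$; taking $m = d$ then gives $K(D) \le B(D)$ for every $D \in \N^{d+1}$, as desired. For $m = e$ this is exactly the first hypothesis. For $e < m \le d$, assuming the statement for $m-1$, I would prove ``$K(D_0, \dots, D_m, 0, \dots, 0) \le B_m(D_0, \dots, D_m)$ for all $D_0, \dots, D_{m-1}$'' by induction on $D_m$: the case $D_m = 0$ follows from the statement for $m-1$ together with $B_m(\cdots, 0) = B_{m-1}(\cdots)$, while for $D_m \ge 1$ the second hypothesis supplies $u_0, \dots, u_{m-1}$ with $0 \le u_i \le W(D_0, \dots, D_m, 0, \dots, 0)$ and
\[ K(D_0, \dots, D_m, 0, \dots, 0) \le K(D_0 + u_0, \dots, D_{m-1} + u_{m-1}, D_m - 1, 0, \dots, 0), \]
so the inner induction hypothesis bounds the right-hand side by $B_m(D_0 + u_0, \dots, D_{m-1} + u_{m-1}, D_m - 1)$, which does not exceed the maximum defining $B_m(D_0, \dots, D_m)$.

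The only genuinely delicate point, as flagged above, is arranging the induction so that the unbounded-looking growth of the lower coordinates is harmless; once $B_m$ is defined by recursion on the top coordinate with the lower coordinates carried along as free arguments, this growth is precisely what the maximum in the definition of $B_m$ accounts for, and everything closes. The remaining ingredients — finiteness of the maxima, the dependence of $B$ on $e, d, V, W$ only, the base case $m=e$, and the (vacuous, when $e=d$) handling of the edge case — are routine.
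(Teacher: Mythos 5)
Your proposal is correct and takes essentially the same approach as the paper: the paper defines $B$ and verifies $K\le B$ by a single well-founded recursion/induction on the colexicographic order on $\N^{d+1}$ (taking a minimal counterexample), and your double induction — outer on the largest index $m$ still allowed to be nonzero, inner on $D_m$, with the lower coordinates carried as free parameters — is precisely an unwinding of that colex induction into its nested form.
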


\begin{proof}

We proceed by induction. The first assumption shows that we may take $B(D)=V(D)$ for all $D \in \N^{d+1}$ such that $D_m = 0$ for all $e<m\le d$, which finishes the base case. Then, assuming for contradiction that there exists some other $D = (D_1, \dots, D_m, 0, \dots 0)$ with $D_m \neq 0$ for which the conclusion is not satisfied, we consider the smallest such $D$ for the colexicographic order on $\N^{d+1}$; but then, the second assumption shows that taking \[B(D) = \max_{0 \le u_0, \dots, u_{m-1} \le W(D_0, D_1, \dots, D_{m}, 0, \dots, 0)} B(D_0+u_0, D_1+u_1, \dots, D_{m-1}+u_{m-1}, D_{m}-1, 0, \dots, 0)\] does ensure $K(D) \le B(D)$, a contradiction. \end{proof}

There remains the matter of how the successive acceptable decompositions will be obtained.

\begin{proof}[Proof of Theorem \ref{Main result on ranges}]

Let $P$ be as in the assumption of Theorem \ref{Main result on ranges}, that is, let us assume that $P(S^n)$ does not contain $A(\F_p)$ for any non-constant polynomial $A:\mathbb{F}_p \to \mathbb{F}_p$ with degree at most $t$. Our starting acceptable decomposition consists in writing $P$ merely as itself: this provides a decomposition of size $1$ and with degree description $(0,\dots,0,1)$.

The image $P(S^n)$ is in particular not the whole of $\F_p$, so by Theorem \ref{Equidistribution on restricted alphabets}, the polynomial $P$ coincides on $S^n$ with some polynomial of the type \[P = Q_1 R_1 + \dots + Q_{s} R_s\] for some $s \le H_{p,d,S}(1/2p)$ and some polynomials $Q_1, \dots, Q_s, R_1, \dots, R_s$ satisfying \[\deg Q_i \le d-1, \deg R_i \le d-1 \text{ and }\deg Q_i + \deg R_i \le d\] for each $i \in [s]$. This is in particular an acceptable decomposition $(\cf_1, F_1)$ with size at most $2H_{p,d,S}(1/2p)$ and with degree description $(D_0,\dots,D_{d-1},0)$ for some \[D_0, \dots, D_{d-1} \le 2H_{p,d,S}(1/2p).\]

After this, diminishing the degree description becomes more complicated, and lends itself to the inductive step that we will now describe. Assume that for some $l \ge 1$ we have obtained an acceptable decomposition $(\cf_l, F_l)$ of $P$, and let us construct $(\cf_{l+1}, F_{l+1})$. Let $s$ be the size of the decomposition $(\cf_l, F_l)$. Recall that $e = \lfloor d/(t+1) \rfloor$. We now distinguish three cases. Letting $k$ be the size of $\cf_l$, we may write $\cf_l= \{P_1, \dots, P_k\}$ for some polynomials $P_1, \dots, P_k$.

\textbf{Case 1:} All polynomials $P_1, \dots, P_k$ have modified degrees at most $e$. Then we stop the induction.

We now assume that we are not in Case 1. Without loss of generality we may assume that $P_k$ has modified degree strictly greater than $e$, and has maximal modified degree $m$ among the modified degrees of $P_1, \dots, P_k$. By gathering terms inside \eqref{Definition of acceptable decomposition} depending on the power of $P_{k}$ appearing inside the term, we may hence rewrite the decomposition \eqref{Definition of acceptable decomposition} as \begin{equation} P = \sum_{r=0}^t (T_r \circ (P_1, \dots, P_{k-1})) P_{k}^r \label{decomposition with respect to the last polynomial} \end{equation} for some polynomials $T_0, \dots, T_{t}: \F_p^{k-1} \to \F_p$, such that the degree of each summand is still at most $d$: in particular this is why we may stop the summation at $t$.

\textbf{Case 2:} The image \[(T_1 \circ (P_1, \dots, P_{k-1}), \dots, T_t \circ (P_1, \dots, P_{k-1}))(S^n)\] is equal to $\{0\}^t$. Then $P$ coincides on $S^n$ with the polynomial $T_0 \circ (P_1, \dots, P_{k-1})$. This provides a new acceptable decomposition $(\cf_{l+1}, F_{l+1})$ of $P$, with $\cf_{l+1} = \cf_l \setminus \{P_k\}$. We can check that the families $\cf_l$ and $\cf_{l+1}$ satisfy items (i)-(iii). We now assume that we are not in Case 2 either.

\textbf{Case 3:} Assume for contradiction that the polynomial $P_{s}$ satisfies \begin{equation} \rk_{m - 1,S} \left( P_{k} - \sum_{i=1}^{k-1} a_i P_i \right) > H_{p,m,S}(|S|^{-(p-1)mk} / 2p) \label{rank inequality in the proof of the main theorem} \end{equation} for all $a \in \F_p^{k-1}$. Because we are not in Case 2, the image \[(T_0 \circ (P_1, \dots, P_{k-1}), T_1 \circ (P_1, \dots, P_{k-1}), \dots, T_t \circ (P_1, \dots, P_{k-1}))(S^n)\] contains some element $h$ of $\F_p^{t+1}$ which has its $t$ last coordinates not all equal to $0$, so there exists some $y \in \F_p^{k-1}$ in the image $(P_1, \dots, P_{k-1})(S^n)$ such that $(T_1, \dots, T_{t})(y) \neq 0$. By \eqref{rank inequality in the proof of the main theorem} and Proposition \ref{Conditional full range on restricted alphabets}, the image $(P_k,P_1, \dots, P_{k-1})(S^n)$ contains $\F_p \times \{y\}$, so the decomposition \eqref{decomposition with respect to the last polynomial} shows that the image $P(S^n)$ contains the image $A(\F_p)$, where $A:\F_p \to \F_p$ is the polynomial defined by \[A(u) = h_0 + h_1 u + \dots + h_t u^t.\] By construction the polynomial $A$ is non-constant and has degree at most $t$, which contradicts the assumption of Theorem \ref{Main result on ranges}. We can hence obtain some $a \in \F_p^{k-1}$, some \[k' \le H_{p,m,S}(|S|^{-(p-1)mk} / 2p),\] some polynomials $P_1', \dots, P_{k'}'$ each with modified degree strictly less than the modified degree $m$ of $P_{k}$ and some polynomial $P_0'$ with degree at most $m$ satisfying $P_0'(S^n) = \{0\}$, such that we can write \begin{equation} P_k = P_0' + \sum_{i=1}^{k-1} a_i P_i + \sum_{i=1}^{k'} P_i'. \label{linear combination of the new polynomial} \end{equation} Replacing each instance of $P_{k}$ by the linear combination \eqref{linear combination of the new polynomial} in the expression \eqref{Definition of acceptable decomposition} (and gathering the contribution of $P_0'$ together with $P_0$ there) leads to a new acceptable decomposition $(\cf_{l+1}, F_{l+1})$ of $P$, where \[\cf_{l+1} = (\cf_l \setminus \{P_k\}) \cup \{P_1', \dots, P_{k'}'\}.\] Again, the families $\cf_l$ and $\cf_{l+1}$ satisfy items (i)-(iii). This concludes the inductive step.

Items (i)-(iii) and Lemma \ref{Lemma on bounds in the induction} then show that there exists some integer $\Psi(p,d,t)$ such that we can obtain an acceptable decomposition $(\cf, F)$ of $P$ with size at most $\Psi(p,d,t)$ and such that all its polynomials in $\cf$ have modified degree at most $e$. We define the quantities \[C_{\mathrm{pre}}(p,d,t) = \sum_{0 \le v \le d} \Psi(p,d,t)^v \text{ and } C(p,d,t) = p^{C_{\mathrm{pre}}(p,d,t)}.\]The number of products of elements of $\cf$ which are polynomials with degree at most $d$ is at most $C_{\mathrm{pre}}(p,d,t)$, so the number of linear combinations of them is at most $C(p,d,t)$ and we conclude as desired that \[\rk_{e,S} P \le C(p,d,t). \qedhere \] \end{proof}

\section{Further questions}\label{Section: Open problems}

Our results still leave open several questions regarding what we may infer about the structure of a polynomial from its range.

Let us begin by looking at degree-$2$ polynomials. As we discussed at the very end of the proof of Proposition \ref{Degree-2 result}, the special case of Theorem \ref{Equidistribution on restricted alphabets} for degree-$2$ polynomials, Proposition 2.11 of \cite{Gowers and K equidistribution}, implies in particular that there exists an absolute constant $\k''>0$ such that if $P$ is a degree-$2$ polynomial satisfying $P(S^n) \neq \F_p$, then $\rk_{1,S} P \le \k'' p^5$. We may ask whether this power bound in $p$ may be improved to a linear bound, and even what the optimal bound is.

In the case where $P$ has degree $1$ and $|S| \ge 2$, the Cauchy-Davenport theorem provides an analogous bound $\rk_0 P \le \frac{p-2}{|S|-1}$ on the size of any such polynomial (if $|S|=1$, then $\rk_0 P \le 1$ automatically). The value $\frac{p-2}{|S|-1}$ is attained for instance for $S = \{0,1\}$ and \[P = x_1 + \dots + x_{p-2}.\] A degree-$2$ analogue of this example is the polynomial \[P = x_1x_2 + \dots + x_{2p-5} x_{2p-4},\] which has degree-$1$ rank equal to $p-2$. Indeed $P$ has rank $2p-4$ as a quadratic form, and it is straightforward to check that the rank of a quadratic form is at most twice its degree-$1$ rank. We have not found an example that would settle the following question negatively, and although this question would vaguely appear to be a kind that could potentially be approached using the combinatorial Nullstellensatz, we have not managed to use this technique to settle it positively either.

\begin{question} Let $p$ be a prime, and let $S$ be a non-empty subset of $\F_p$. Let P$:\F_p^n \to \F_p$ be a degree-$2$ polynomial. Is it true that if $P(S^n) \neq \F_p$ then $\rk_{1,S} P \le p-2$ ? \end{question}

More generally, we can ask for better bounds in Theorem \ref{Main result on ranges}. Even if we assume that the bounds on $H_{p,d,S}(\epsilon)$ from Theorem \ref{Equidistribution on restricted alphabets} can be taken, for a fixed $p$, to be linear in $\log \epsilon^{-1}$ (something which, at the time of writing, is not yet known to be true even for $S = \F_p$), the degree-$1$ case (\cite{Gowers and K equidistribution}, Proposition 2.2) already has an inverse square dependence on $p$. So for any fixed degree $3 \le d < p$, the number of iterations on the degree description in the proof of Theorem \ref{Main result on ranges} would still have a worse than exponential growth rate in $p$, and the final bounds in Theorem \ref{Main result on ranges} would hence be correspondingly weak.

\begin{question} Can we obtain polynomial bounds in $p$ in Theorem \ref{Main result on ranges} ? \end{question}

In a somewhat similar direction, we may ask whether there is some other proof of Theorem \ref{Main result on ranges}. That the proof of Theorem \ref{Main result on ranges} in the present paper is rather short is due to the fact that it uses Theorem \ref{Equidistribution on restricted alphabets} as a black box: as discussed in \cite{Gowers and K equidistribution}, the proof of Theorem \ref{Equidistribution on restricted alphabets} itself uses as black boxes the facts that high partition implies high analytic rank and that high essential partition rank implies high essential disjoint partition rank, with the latter implication in the sense of \cite{Gowers and K equidistribution}, Definition 1.9 and \cite{Gowers and K equidistribution}, Theorem 1.10, which in turn was proved in \cite{K} as a special case of \cite{K}, Theorem 1.8.

\begin{question} Does Theorem \ref{Main result on ranges} have a purely combinatorial proof ? \end{question}

As we explained in the introduction, we cannot hope, in the statement of Theorem \ref{Main result on ranges}, to obtain that if a polynomial $P: \F_p^n \to \F_p$ satisfies $P(S^n) \neq \F_p$, then the polynomial $P$ coincides on $S^n$ with the sum of a \emph{single} polynomial of the type $A \circ Q$ and of a polynomial determined by a bounded number of coordinates. Nonetheless, this does not mean that obtaining a variant of that description is necessarily impossible. For fixed $1 \le d < p$, let $m(p,d)$ be the smallest positive integer such that whenever $A_1, \dots A_{m}: \F_p \to \F_p$ are non-constant polynomials each with degree at most $d$ we have \[A_1(\F_p) + \dots + A_m(\F_p) = \F_p.\] We can correspondingly formulate the most optimistic refinement of Theorem \ref{Main result on ranges} that we may still hope for.

\begin{question}

Let $p$ be a prime, let $1 \le d < p$ be an integer and let $S$ be a non-empty subset of $\F_p$. Is it true that if $P: \F_p^n \to \F_p$ is a polynomial with degree $d$ satisfying $P(S^n) \neq \F_p$, then $P$ coincides on $S^n$ with some polynomial of the type \[A_1 \circ Q_1 + \dots + A_l \circ Q_l + J\] for some integer $l < m$, some polynomials $Q_1, \dots, Q_l$ with degrees at most $\lfloor d/2 \rfloor$, some polynomials $A_1, \dots, A_l:\F_p \to \F_p$ such that the polynomials $A_1 \circ Q_1, \dots,  A_l \circ Q_l$ all have degrees at most $d$, and some $k$-determined polynomial $J$ with degree at most $d$, for some $k$ that is bounded depending on $p,d,t$ (and $S$) only ?

\end{question}

One possibility would be to try to imitate the proof of Proposition \ref{Degree-2 result}, but it breaks down in a key way in that the obvious would-be analogue of equation \eqref{diagonalisation step} for higher degrees does not hold: unlike for $d=2$ it is not true for $d \ge 3$ that if $k$ is a positive integer, then a homogeneous polynomial of degree $d$ in $k$ variables can be written as a sum of at most $k$ $d$-powers.

We note that using the polarisation identity on multilinear polynomials shows that a polynomial with degree at most $1\le d<p$ and of degree-$\lfloor d/(t+1) \rfloor$ rank at most some positive integer $r$ can always be rewritten as a linear combination of at most $2^dr$ powers of polynomials with degrees at most $d\lfloor d/(t+1) \rfloor$. This may be viewed as going some way in establishing a result of this type, although, as is shown by the example \[4 (x_1^3x_2 + x_3^2x_4^2) = (x_1^3+x_2)^2 - (x_1^3-x_2)^2 + (x_3^2+x_4^2)^2 - (x_3^2-x_4^2)^2,\] the powers may have degrees larger than the degree of the original polynomial, and this is why we have refrained from defining our notions of rank in terms of powers rather than products in Definition \ref{rank of a polynomial} and in Definition \ref{further definitions of rank of a polynomial}.

Let us finish by a question which may have occurred to the reader, although it is purely speculative.

\begin{question} Does the case $S = \{0,1\}$ of Theorem \ref{Main result on ranges} have any computer science applications ? \end{question}

\end{document}